\documentclass{article}

\usepackage[preprint]{neurips_2026}

\usepackage[utf8]{inputenc} 
\usepackage[T1]{fontenc}    

\usepackage{xcolor}  
\usepackage{url}            
\usepackage{booktabs}       
\usepackage{amsfonts}       
\usepackage{aliascnt} 
\usepackage{nicefrac}       
\usepackage{microtype}      
\usepackage{xcolor}         
\usepackage{amsmath}
\usepackage{amsthm}
\usepackage{algorithm}
\usepackage{algpseudocode}

\usepackage{overpic}
\usepackage{graphicx}
\usepackage{subcaption}

\usepackage{jabbrv}
\DefineSpuriousJournalWord{on}
\DefineSpuriousJournalWord{for}
\DefineSpuriousJournalWord{its}
\DefineJournalAbbreviation{Foundations}{Found}
\DefineJournalAbbreviation{Transactions}{Trans}
\DefineJournalAbbreviation{National}{Natl}
\DefineJournalAbbreviation{Quantification}{Quantif}

\usepackage{hyperref}
\usepackage{bookmark}
\hypersetup{breaklinks=true}

\usepackage[capitalize]{cleveref}

\theoremstyle{plain}
\newtheorem{theorem}{Theorem}[section]
\newaliascnt{lemma}{theorem}
\newtheorem{lemma}[lemma]{Lemma}
\aliascntresetthe{lemma}
\newaliascnt{proposition}{theorem}
\newtheorem{proposition}[proposition]{Proposition}
\aliascntresetthe{proposition}
\newaliascnt{corollary}{theorem}
\newtheorem{corollary}[corollary]{Corollary}
\aliascntresetthe{corollary}
\theoremstyle{definition}
\newaliascnt{definition}{theorem}
\newtheorem{definition}[definition]{Definition}
\aliascntresetthe{definition}
\newaliascnt{assumption}{theorem}

\aliascntresetthe{assumption}
\theoremstyle{remark}
\newaliascnt{remark}{theorem}
\newtheorem{remark}[remark]{Remark}
\aliascntresetthe{remark}
\newaliascnt{example}{theorem}

\aliascntresetthe{example}
\crefname{theorem}{Theorem}{Theorems}
\crefname{lemma}{Lemma}{Lemmas}
\crefname{proposition}{Proposition}{Propositions}
\crefname{corollary}{Corollary}{Corollaries}
\crefname{definition}{Definition}{Definitions}
\crefname{assumption}{Assumption}{Assumptions}
\crefname{remark}{Remark}{Remarks}
\crefname{example}{Example}{Examples}
\crefname{algorithm}{Algorithm}{Algorithms}

\title{Conformal Uncertainty Quantification Guarantees for Neural Operators}

\author{
Tom Stent \\
Department of Mathematics\\
Imperial College London\\
London, SW7 2AZ, UK \\
\texttt{tom.stent25@imperial.ac.uk}
\And
Nicolas Boullé \\
Department of Mathematics\\
Imperial College London\\
London, SW7 2AZ, UK \\
\texttt{n.boulle@imperial.ac.uk}
}

\begin{document}

\maketitle

\begin{abstract}
  Neural operators provide fast surrogate models for approximating operators between function spaces, but their predictions often lack uncertainty quantification. We develop a split conformal framework to guarantee that a calibrated pointwise band around the neural operator output contains the true solution on at least a $1-\gamma$ fraction of the evaluation domain, with probability at least $1-\alpha$ over test and calibration inputs, where $\alpha,\gamma \in(0,1)$. Our method reduces a normalized residual field to its spatial $(1-\gamma)$-quantile and computes a scaling factor using a held-out calibration dataset. We prove marginal coverage guarantees for measurable residual fields defined on arbitrary probability spaces, covering both  continuum domains and fixed discretizations. Under mild assumptions on the data distribution, we show that the coverage conditional on the calibration set follows a Beta distribution, which we verify with numerical experiments on Darcy flow and Navier–Stokes equations, where our calibration yields bands consistently tighter than existing corrections while retaining the target coverage.
\end{abstract}

\section{Introduction}

Operator learning~\citep{boulle2024mathematical,kovachki2023neural,kovachki2024operator,li2021fourier,lu2021deeponet} has become an increasingly popular approach within the field of scientific machine learning~\citep{karniadakis2021physics} to approximate nonlinear operators, such as solution operators associated with partial differential equations (PDEs), between infinite-dimensional function spaces using pairs of input-output functions. A wide range of neural operator architectures, generalizing neural networks to infinite dimensions, have been recently introduced such as DeepONet \citep{lu2021deeponet}, Fourier neural operator (FNO) \citep{kovachki2023neural,li2021fourier}, convolutional neural operators \citep{raonic2023convolutional}, U-shaped neural operators \citep{rahman2023uno}, multi-resolution architectures~\citep{bouziani2024structure,gupta2021multiwavelet,he2024mgno,li2020multipole}, transformer neural operators \citep{cao2021choose,hao2023gnot}, and foundation models \citep{herde2024poseidon,wang2026mixture}. These data-driven PDE surrogate techniques have also been applied to large-scale weather forecasting systems \citep{bi2023pangu,lam2023graphcast,pathak2022fourcastnet}.

While several theoretical works studied approximation and sample complexity properties of neural operators \citep{boulle2024operator,boulle2023elliptic,kovachki2021universal,kovachki2024operator,lanthaler2022error}, ensuring the reliability of their predictions remains a key challenge. In particular, quantifying uncertainty or estimating prediction errors in operator learning may be crucial for applications like weather forecasting, climate modelling, and engineering design. Several neural operator uncertainty quantification methods have been recently introduced \citep{psaros2023uncertainty,zou2024neuraluq}, such as Bayesian neural operators \citep{magnani2022abno}, function-valued Gaussian processes \citep{magnani2024luno}, and probabilistic neural operators trained with proper scoring rules \citep{bulte2025probabilistic}. Ensemble methods have also been used to identify high-error regions and improve operator predictions under distribution shift \citep{mouli2024using}. These approaches adapt broader Bayesian and ensemble techniques \citep{gal2016dropout,gawlikowski2023survey,he2026survey,lakshminarayanan2017simple}, but the resulting uncertainty estimates usually depend on modelling assumptions, training objectives, or data distribution, and generally lack coverage guarantees.

Conformal prediction is a technique that uses nonconformity scores computed on a held-out calibration set to construct a prediction set containing the ground truth solution with high probability \citep{angelopoulos2023gentle,angelopoulos2024conformalbook,shafer2008tutorial,vovk2005algorithmic}. One computes these scores on calibration inputs and uses an upper empirical quantile as the threshold; this yields finite sample, marginal coverage under exchangeability, i.e., coverage in probability over a new draw from the same data distribution. Recent scientific machine learning methods exploit conformal prediction cellwise, grouping nonconformity scores across pairs of calibration sample and grid location and calibrating a single quantile \citep{gopakumar2024valid,gopakumar2025calibrated,gopakumar2026uncertainty,moya2025conformalized,yu2026conformal}. One limitation of this approach is that the guarantee is marginal over both test solutions and spatial locations, hence controlling the average fraction of covered solution-location pairs, instead of the fraction of the domain covered for each individual solution. \citet{gray2025guaranteed,wang2026operator} construct simultaneous prediction sets for function-valued surrogate outputs to obtain strong spatial guarantees. However, the coverage band must accommodate the worst location on the grid, which may result in uninformative estimates for difficult problems. Then, \cite{millard2025split} extend split conformal prediction to neural operators by calibrating in a discretized function space and lifting the guarantee to the underlying function space, assuming that the discretization map is bilipschitz. Closer to this work is the UQNO framework introduced in \citep{ma2024calibrated}, which trains two neural operators: a predictor for the solution and an estimator for the error field, then calibrates a single scaling factor to guarantee that the ground truth is covered by a prediction band over at least a target fraction of the spatial domain. \cite{ma2024calibrated} employ a Hoeffding correction to lift empirical spatial coverage to the continuum domain. This correction requires i.i.d.\ spatial evaluation points and therefore does not apply to the deterministic grids used by standard FNO implementations and may overestimate the error.

\paragraph{Main contributions.}
This work rigorously formulates split conformal calibration for neural operators using a generic measurable residual field, modelling the ratio between the prediction error and the estimated one, over an arbitrary probability space. This framework applies to both the continuum and discretized settings, giving finite-sample marginal coverage under exchangeability of the data without concentration corrections. Moreover, under weak assumptions on the data distribution (i.i.d.\ inputs inducing a continuous distribution of conformal scores), we show that the coverage conditional on the calibration set follows a Beta distribution, which quantifies the variance of the marginal guarantee with respect to the calibration set size. Our numerical experiments on the two-dimensional Darcy flow and Navier--Stokes equations demonstrate the validity of our coverage guarantees.

\section{Calibration Set and Coverage Guarantees}\label{sec:theory}

\paragraph{Setup.}
Let $D \subset \mathbb{R}^d$ be a bounded, Lebesgue measurable, domain, with $0 < \mathrm{Leb}(D) < \infty$, and define the normalized Lebesgue measure $\nu (B) := \mathrm{Leb}(B) / \mathrm{Leb}(D)$. We consider separable Banach spaces $\mathcal{A}=\mathcal{A}(D; \mathbb{R}^{d_a})$ and $\mathcal{U}=\mathcal{U}(D; \mathbb{R}^{d_u})$, whose elements are measurable vector-valued input and output functions on $D$. We equip these spaces with their respective Borel sigma algebras $\mathcal{B}(\mathcal{A})$ and $\mathcal{B}(\mathcal{U})$, and consider the Borel measurable operators $\mathcal{G}^\dagger, \hat{\mathcal{G}} : \mathcal{A} \rightarrow \mathcal{U}$,
representing the true PDE solution operator and a pre-trained neural operator (approximating $\mathcal{G}^\dagger$). We study the Borel measurable error estimator $\hat{\mathcal{E}}: \mathcal{A} \rightarrow \mathcal{U}(D; \mathbb{R})$, which outputs a pointwise scalar estimate of the error of predictions of $\hat{\mathcal{G}}$, with $\hat{\mathcal{E}}(a) \ge 0$ $\nu$-almost everywhere (a.e.) for every $a \in \mathcal{A}$. We model $\hat{\mathcal{E}}$ as an FNO trained on the quantile loss of the error field of predictions of $\hat{\mathcal{G}}$ \citep{ma2024calibrated}, however all results apply for any pointwise error estimation technique satisfying the relevant assumptions. Note that FNO architectures consist of compositions of continuous operators, so are themselves continuous (and therefore measurable). Therefore, for
$\hat{\mathcal{G}}$ and $\hat{\mathcal{E}}$ represented as such, Borel measurability holds. Moreover, if the underlying PDE problem is Hadamard well-posed between the spaces $\mathcal{A}$ and $\mathcal{U}$, then $\mathcal{G}^\dagger : \mathcal{A} \to \mathcal{U}$ is continuous, hence Borel. We emphasize that continuity is only a sufficient condition as we only require Borel measurability of $\mathcal{G}^\dagger$, $\hat{\mathcal{G}}$ and $\hat{\mathcal{E}}$.

\paragraph{Motivation.} Let $a\in\mathcal{A}$ be an input function and consider the neural operator prediction $\hat{\mathcal{G}}(a) \approx \mathcal{G}^\dagger(a)\in \mathcal{U}$ and the error estimator $\hat{\mathcal{E}}(a) \approx \|\mathcal{G}^\dagger(a) - \hat{\mathcal{G}}(a)\|_2$. Given $x\in D$, we aim to construct a closed ball prediction band $C_\lambda(a)(x) = \bar{B}(\hat{\mathcal{G}}(a)(x), \lambda \hat{\mathcal{E}}(a)(x))\subset \mathbb{R}^{d_u}$ around the prediction $\hat{\mathcal{G}}(a)(x)$, with a scaling factor $\lambda\in [0,\infty]$ controlling the radius of the ball. The goal is to choose $\lambda$ such that the true solution $\mathcal{G}^\dagger(a)(x)$ lies within this band for a fraction $1-\gamma$ of points $x\in D$ (with respect to the measure $\nu$), with probability greater than $1-\alpha$ over the draw of $a$ from a distribution $\mu$ on $\mathcal{A}$, where $\alpha,\gamma\in(0,1)$ are fixed.

\subsection{Calibration set and scaling factor}\label{sec:setup}

Since $\mathcal{G}^\dagger(a)$, $\hat{\mathcal{G}}(a)$ and $\hat{\mathcal{E}}(a)$ are only defined up to equivalence classes in $\mathcal{U}$, the containment set
$\{x \in D : \mathcal{G}^\dagger(a)(x) \in C_\lambda(a)(x)\}$ is not specified pointwise. We must therefore check both that its $\nu$-measure is unchanged by the choice of representatives, and that this measure is a Borel function of $a$. It suffices to assume the existence of product-measurable representatives $g^\dagger, \hat{g}:\mathcal{A}\times D \rightarrow \mathbb{R}^{d_u}$, $\hat{e}:\mathcal{A}\times D \rightarrow \mathbb{R}_{\ge 0}$ of the operators $\mathcal{G}^\dagger$, $\hat{\mathcal{G}}$ and $\hat{\mathcal{E}}$. This assumption holds whenever $\mathcal{U}$ embeds continuously in $C(\overline{D};\mathbb{R}^{d_u})$ or in some $L^p(D;\mathbb{R}^{d_u})$ for $1 \le p < \infty$, which covers every Lebesgue, Sobolev and H\"older space on $D$. For $\hat{\mathcal{G}}$ and $\hat{\mathcal{E}}$ this is immediate: FNOs are continuous operators with trigonometric polynomial outputs, so the continuous representative is unique and depends measurably on $a$. For $\mathcal{G}^\dagger$ only Borel measurability of the operator is required, which follows from Hadamard well-posedness of the PDE.

\begin{definition}[Prediction band] \label{def:prediction band}
  Let $\lambda \in [0, \infty]$, $a \in \mathcal{A}$, and $x \in D$, we define the prediction band $C_\lambda(a)(x) \subset \mathbb{R}^{d_u}$ around the prediction $\hat{\mathcal{G}}(a)(x)$ as
  \[
    C_\lambda(a)(x) := \bigl\{ y \in \mathbb{R}^{d_u} :
    \|y - \hat{\mathcal{G}}(a)(x)\|_2 \le \lambda\, \hat{\mathcal{E}}(a)(x) \bigr\}= \bar{B}(\hat{\mathcal{G}}(a)(x), \lambda \hat{\mathcal{E}}(a)(x)),
  \]
  with the convention that $C_\infty(a)(x):=\mathbb{R}^{d_u}$.
\end{definition}

The constant $\lambda\in [0, \infty]$ is a ``scaling factor'', which uniformly scales the error estimator $\hat{\mathcal{E}}(a)(x)$. \cref{lemma:rep-indep} shows that the containment fraction of the domain, i.e., $\nu(\{x \in D : \mathcal{G}^\dagger(a)(x) \in C_\lambda(a)(x)\})$,  is independent of the choice of representatives for the operators, so that one can drop the representative notation and write $\mathcal{G}^\dagger$, $\hat{\mathcal{G}}$, $\hat{\mathcal{E}}$ for any fixed choice of product-measurable representatives.

Let $\mu$ be a probability measure on $(\mathcal{A},\mathcal{B}(\mathcal{A}))$, and fix a \emph{spatial tolerance} $\gamma\in(0,1)$, the fraction of the domain on which containment may fail for a given input, and a \emph{probability tolerance} $\alpha\in(0,1)$, the maximum probability that this spatial requirement may fail. Let $A_1,\dots,A_{n+1}$ be exchangeable input functions with common marginal law $\mu$, and denote the calibration set by $\mathcal{D}_{\mathrm{cal}}=(A_1,\dots,A_n)$. We aim to choose a scaling factor $\hat\lambda=\hat\lambda(\mathcal{D}_{\mathrm{cal}})$ such that
\begin{equation}\label{eq:marginal guarantee}
  \mathbb{P}_{A_1,\ldots,A_{n+1}}\bigl( \nu \bigl(\{ x \in D :
  \mathcal{G}^\dagger(A_{n+1})( x) \in C_{\hat{\lambda}}(A_{n+1})(x) \}\bigr)
  \ge 1 - \gamma \bigr) \ge 1 - \alpha,
\end{equation}
i.e., the true output for the test input is contained on at least a fraction $1-\gamma$ of $D$ with probability at least $1-\alpha$. This is a marginal guarantee over the joint draw of the calibration set and test input, and exchangeability alone is sufficient. Under the assumption that $A_1,\dots,A_{n+1}$ are independent and identically distributed (i.i.d.), the calibration set is independent of the test input and the marginal probability in \cref{eq:marginal guarantee} decomposes as
\begin{equation}\label{eq:conditional decomposition}
  \mathbb{E}_{\mathcal{D}_{\mathrm{cal}}}\Bigl[\,
    \mathbb{P}_{A_{n+1}}\bigl( \nu(\{ x : \mathcal{G}^\dagger(A_{n+1})(x) \in
    C_{\hat\lambda}(A_{n+1})(x)\}) \ge 1-\gamma \,\bigm|\,
    \mathcal{D}_{\mathrm{cal}} \bigr) \Bigr] ,
\end{equation}
an average over calibration sets of the conditional probability of sufficient containment. The marginal guarantee controls this average, not the conditional probability for every realized calibration set.

\paragraph{Coverage for arbitrary residual fields.}
Before specializing to neural operators, \cref{sec:general proof} establishes the conformal argument in a general setting: given a measurable residual field $r:\mathcal{A}\times X\to[0,\infty]$ on a probability space $(X,\mathcal{X},\pi)$, we define a scalar score as the smallest threshold that contains at least a $1-\gamma$ fraction of the evaluation space $X$ and calibrate an order statistic of these scores to obtain marginal coverage at least $1-\alpha$. Here, the argument only depends on measurability of the field and nothing else, not the model, the PDE, or even that $X$ be a subset of the domain $D$. The sections that follow apply \cref{thm:general guarantee} by specifying the evaluation space, its probability measure, and the relevant residual fields: first for the idealized continuum problem, then for discrete observations.

\subsection{Continuum coverage guarantee}\label{sec:continuum guarantee}

We first consider the idealized setting in which the truth, prediction, and error estimator can be evaluated at every point $x\in D$, instead of at a finite set of points. To define the scaling factor $\lambda$ associated with the prediction band $C_{\lambda}$ from \cref{def:prediction band}, we introduce the continuum residual field $\rho:\mathcal{A}\times D\to[0,\infty]$ as pointwise prediction error normalized by the error estimator:
\begin{equation}\label{eq:continuum residual}
  \rho(a,x)=\frac{\|\mathcal{G}^\dagger(a)(x)-\hat{\mathcal{G}}(a)(x)\|_2}{\hat{\mathcal{E}}(a)(x)}
  \in[0,\infty],\quad a\in\mathcal{A},\,x\in D,
\end{equation}
where $\rho(a,x)=0$ if $\hat{\mathcal{E}}(a)(x)=0$ and $\mathcal{G}^\dagger(a)(x)=\hat{\mathcal{G}}(a)(x)$, and $\rho(a,x)=\infty$ if $\hat{\mathcal{E}}(a)(x)=0$ and $\mathcal{G}^\dagger(a)(x)\neq\hat{\mathcal{G}}(a)(x)$. Since the operators $\mathcal{G}^\dagger$, $\hat{\mathcal{G}}$, and $\hat{\mathcal{E}}$ are Borel measurable, the residual field $\rho$ is jointly measurable with respect to the product sigma algebra $\mathcal{B}(\mathcal{A})\otimes\mathcal{B}(D)$. Then, for every $a\in\mathcal{A}$ and $x\in D$, $\mathcal{G}^\dagger(a)(x)\in C_\lambda(a)(x)$ if and only if $\rho(a,x)\leq\lambda$, that is $\{x\in D:\rho(a,x)\leq\lambda\} = \{x\in D:\mathcal{G}^\dagger(a)(x)\in C_\lambda(a)(x)\}$ so that one can apply the results from \cref{sec:general proof} to obtain a marginal coverage guarantee for the continuum problem.

\paragraph{Choice of scaling factor.}
Given $a\in\mathcal{A}$, we respectively define the ``containment fraction'' $w_\lambda(a)$ of the domain for which the residual is smaller than $\lambda$ and the corresponding scalar nonconformity score $s(a)$, measuring the spatial $(1-\gamma)$-quantile, as
\begin{equation}\label{eq:continuum score}
  w_\lambda(a)=\nu\bigl(\{x\in D:\rho(a,x)\leq\lambda\}\bigr),
  \quad \text{and}\quad
  s(a)=\inf\{\lambda\geq0:w_\lambda(a)\geq1-\gamma\}.
\end{equation}
Let $k = \lceil(1-\alpha)(n+1)\rceil$, and consider calibration inputs $A_1,\dots,A_n\in\mathcal{A}$. The scaling factor $\hat\lambda$ is chosen as the $k$th smallest value of the nonconformity scores $s(A_1),\dots,s(A_n)$ if $k\leq n$ and $\infty$ otherwise. \cref{lemma:event-meas} then ensures that the nonconformity score $s$ and the resulting calibration map $\hat{\lambda}$ (as a function of the calibration inputs $A_1,\dots,A_n$) are measurable.

\begin{remark}[Ideal error estimator]\label{remark:exact estimator}
  If $\hat{\mathcal{E}}(a)(x)=\|\mathcal{G}^\dagger(a)(x)-\hat{\mathcal{G}}(a)(x)\|_2$ for all $x\in D$, then $\rho(a,x)=1$ for every $a$. Every calibration score then satisfies $s(A_i)=1$, so the calibrated scaling factor is $\hat\lambda=1$, and the prediction band $C_1(a)(x)$ is a ball of radius $\hat{\mathcal{E}}(a)(x)$ whose boundary passes exactly through $\mathcal{G}^\dagger(a)(x)$, achieving coverage over the entire domain $D$.
\end{remark}

The following result is the continuum instantiation of the split-conformal guarantee proved in \cref{thm:general guarantee}, with $(X,\mathcal{X},\pi)=(D,\mathcal{B}(D),\nu)$ and $r=\rho$.

\begin{theorem}[Continuum marginal coverage]\label{thm:continuum guarantee}
  Suppose that $A_1,\dots,A_{n+1}\sim \mu$ are exchangeable, then
  \[
    \mathbb{P}_{A_1,\dots,A_{n+1}}\!\left(
    \nu\bigl(\{x\in D:\mathcal{G}^\dagger(A_{n+1})(x)
    \in C_{\hat\lambda}(A_{n+1})(x)\}\bigr)
    \geq1-\gamma
    \right)
    \geq\frac{k}{n+1}\geq1-\alpha.
  \]
  Moreover, if the $A_i$ are i.i.d.\ and the distribution of $s(A_1)$ is atomless, then the random variable $\mathbb{P}\bigl( w_{\hat\lambda}(A_{n+1}) \ge 1-\gamma \,\bigm|\,
    A_1,\dots,A_n \bigr)$ follows a $\mathrm{Beta}(k,\, n+1-k)$ distribution and
  \[
    \mathbb{P}_{\mu^{\otimes (n+1)}}\!\left(
    \nu\bigl(\{x\in D:\mathcal{G}^\dagger(A_{n+1})(x)
    \in C_{\hat\lambda}(A_{n+1})(x)\}\bigr)
    \geq1-\gamma
    \right)
    =\frac{k}{n+1}\leq 1-\alpha+\frac{1}{n+1}.
  \]
\end{theorem}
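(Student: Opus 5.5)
The plan is to reduce the statement to the standard split-conformal rank argument applied to the scalar scores $S_i:=s(A_i)$. Everything rests on one equivalence: for every $a\in\mathcal{A}$ and $\lambda\in[0,\infty]$,
\[
w_\lambda(a)\ge1-\gamma\iff s(a)\le\lambda .
\]
The forward direction holds by the definition of the infimum. The reverse direction needs right-continuity of $\lambda\mapsto w_\lambda(a)$. The map is nondecreasing, and $\{\rho(a,\cdot)\le\lambda\}=\bigcap_m\{\rho(a,\cdot)\le\lambda+1/m\}$, so continuity from above of the finite measure $\nu$ gives $w_{s(a)}(a)\ge1-\gamma$ whenever $s(a)<\infty$. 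When $s(a)=\infty$ the only admissible $\lambda$ is $\infty$, and $w_\infty(a)=1$ because $\rho$ takes values in $[0,\infty]$. Combined with the identity $\{x:\rho(a,x)\le\lambda\}=\{x:\mathcal{G}^\dagger(a)(x)\in C_\lambda(a)(x)\}$, the coverage event becomes $\{S_{n+1}\le\hat\lambda\}$. \cref{lemma:event-meas} makes $S_i$ and $\hat\lambda$ measurable, so this event is measurable. The statement is therefore the instance $(X,\mathcal{X},\pi)=(D,\mathcal{B}(D),\nu)$, $r=\rho$ of \cref{thm:general guarantee}. For completeness I would still sketch the two parts below.

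\textbf{Marginal bound.} If $k>n$ then $\hat\lambda=\infty$ and the event has probability one. Otherwise, since $S_1,\dots,S_{n+1}$ are exchangeable, I would use the standard deterministic fact that $S_{n+1}\le S_{(k)}$ (the $k$th smallest calibration score) holds if and only if $S_{n+1}\le S'_{(k)}$, where $S'_{(k)}$ is the $k$th smallest of all $n+1$ scores. By exchangeability, $\mathbb{P}(S_j\le S'_{(k)})$ is the same for every $j$. Summing over $j$, the number of indices $j$ with $S_j\le S'_{(k)}$ is at least $k$ even with ties. Hence $\mathbb{P}(S_{n+1}\le\hat\lambda)\ge k/(n+1)$, and $k/(n+1)\ge1-\alpha$ because $k\ge(1-\alpha)(n+1)$.

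\textbf{Beta law and exact probability.} Now assume the $A_i$ are i.i.d.\ and the law of $S_1$ is atomless, with CDF $F$. Since $\hat\lambda$ depends only on the calibration set and $A_{n+1}$ is independent of it, the conditional coverage equals $F(\hat\lambda)=F(S_{(k)})$. The convention $\hat\lambda=\infty$ for $k=n+1$ gives conditional coverage identically $1$, which is the degenerate $\mathrm{Beta}(n+1,0)$ case. For an atomless law, $F(S_i)$ is Uniform$(0,1)$, and $F$ is monotone, so $F(S_{(k)})=U_{(k)}$, the $k$th order statistic of $n$ i.i.d.\ uniforms. This is distributed as $\mathrm{Beta}(k,n+1-k)$. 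Taking expectations gives exactly $k/(n+1)$. Finally, $k<(1-\alpha)(n+1)+1$, so $k/(n+1)<1-\alpha+1/(n+1)$.

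\textbf{Main obstacle.} The rank argument is routine. The real care is needed in the equivalence $w_\lambda\ge1-\gamma\iff s\le\lambda$ at the boundary values, namely attainment of the infimum and the value $\infty$, and in verifying measurability of $s$, which is delegated to \cref{lemma:event-meas}.
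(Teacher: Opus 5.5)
Your proposal is correct and follows the paper's route. The paper proves this theorem as the instance $(X,\mathcal{X},\pi)=(D,\mathcal{B}(D),\nu)$, $r=\rho$ of \cref{thm:general guarantee}, using your right-continuity/score--containment equivalence (\cref{lemma:w borel,lemma:score-containment equivalence}), \cref{lemma:event-meas}, and the identity $\mathbb{P}(\cdot\mid A_1,\dots,A_n)=F(\hat\lambda)$ from \cref{lemma:tower}. The only difference is that you prove the exchangeable-rank bound and the $\mathrm{Beta}(k,n+1-k)$ law directly, and you also flag the degenerate case $k=n+1$; the paper instead cites \citet{angelopoulos2024conformalbook} for both facts and does not address $k=n+1$.
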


\begin{remark}[Finiteness of the scaling factor]\label{remark:cont finite scaling}
  If the error estimator is constructed (e.g. with a strictly positive final activation function like softplus~\citealt{dugas2000incorporating}) such that $\hat{\mathcal{E}}(a)(x) > 0$ $\nu$-a.e.\ for $\mu$-a.e.\ $a\in\mathcal{A}$, then \cref{lemma:finite score} implies that the scaling factor satisfies $\hat\lambda < \infty$ $\mu^{\otimes n}$-a.s.\ provided $\lceil(1-\alpha)(n+1)\rceil \le n$.
\end{remark}

The guarantee in \cref{thm:continuum guarantee} is a marginal coverage guarantee that is finite-sample and distribution-free. While it requires exchangeability of the calibration sets, it does not require any accuracy assumptions on $\hat{\mathcal{G}}$ or $\hat{\mathcal{E}}$. The second part of \cref{thm:continuum guarantee} provides a precise characterization of the marginal coverage probability in the i.i.d.\ case, which is useful for understanding the tightness of the guarantee. This result provides a way to quantify the uncertainty associated with the coverage guarantee and to assess the reliability of the prediction bands generated by the conformal method. The atomless assumption ensures that the nonconformity scores are continuous, which is common in operator learning where the input functions are often drawn from a Gaussian process~\citep{kovachki2023neural,boulle2024mathematical}. Finally, we remark that the continuum score in \cref{eq:continuum score} may not be directly computable from finitely many function evaluations, which motivates the discrete guarantees developed in the next section.

\subsection{Discrete coverage guarantee}\label{subsec:grid guarantee}

We now consider the practical setting in which the solutions, predictions, and the error estimations are evaluated at a discrete set of points $D_M=\{x_1,\dots,x_M\}\subset D$. Rather than measuring containment over the continuum domain, our goal is to guarantee that the solution is contained at a sufficiently large fraction of these grid points. We therefore equip $D_M$ with the power-set sigma algebra $2^{D_M}$ and the uniform counting probability measure $
  \nu_M(B)
  =\tfrac{1}{M}\sum_{j=1}^{M}\delta_{x_j}(B)$ for $B\subseteq D_M$.
Let $\hat{\mathcal{G}}_M$ and $\hat{\mathcal{E}}_M$ denote the prediction and error estimators evaluated on $D_M$. For $\lambda\in[0,\infty]$, we define the discrete prediction band by
$
  C^M_\lambda(a)(x_j)
  =\bigl\{y\in\mathbb{R}^{d_u}:
  \|y-\hat{\mathcal{G}}_M(a)(x_j)\|_2
  \leq\lambda\hat{\mathcal{E}}_M(a)(x_j)\bigr\}$
and introduce the discrete residual field $\rho_M:\mathcal{A}\times D_M\to[0,\infty)$ as the pointwise error between the prediction and solution, normalized by the error estimator:
\begin{equation}\label{eq:grid residual}
  \rho_M(a,x_j)
  =\frac{\|\mathcal{G}^\dagger(a)(x_j)-\hat{\mathcal{G}}_M(a)(x_j)\|_2}
  {\hat{\mathcal{E}}_M(a)(x_j)},\quad a\in\mathcal{A},\,x_j\in D_M.
\end{equation}
Here, we assume that $a\mapsto \mathcal{G}^\dagger(a)(x_j)$ and the coordinate maps of $\hat{\mathcal{G}}_M$ and $\hat{\mathcal{E}}_M$ are Borel measurable for every $j$, and that $\hat{\mathcal{E}}_M(a)(x_j)>0$. Since $D_M$ is finite, $\rho_M$ is jointly measurable with respect to $\mathcal{B}(\mathcal{A})\otimes2^{D_M}$. Then, for every $a\in\mathcal{A}$ and $x_j\in D_M$, $\mathcal{G}^\dagger(a)(x_j)\in C^M_\lambda(a)(x_j)$ if and only if $\rho_M(a,x_j)\leq\lambda$, so the results of \cref{sec:general proof} also apply to the discrete problem.

We define the fraction of grid points for which the residual is smaller than $\lambda$ and the corresponding scalar nonconformity score similarly to \cref{sec:continuum guarantee} as $w^M_\lambda(a)=\frac{1}{M}\sum_{j=1}^{M}\mathbf{1}\{\rho_M(a,x_j)\leq\lambda\}$ and
$s_M(a)=\inf\{\lambda\geq0:w^M_\lambda(a)\geq1-\gamma\}$.
Unlike the continuum case, this spatial quantile can be computed exactly from finitely many evaluations: if $q=\lceil(1-\gamma)M\rceil$, then $s_M(a)$ is the $q$th smallest value among $\rho_M(a,x_1),\dots,\rho_M(a,x_M)$. Let $k=\lceil(1-\alpha)(n+1)\rceil$, given calibration inputs $A_1,\dots,A_n$, the scaling factor $\hat\lambda_M$ is chosen as the $k$th smallest value of the scores $s_M(A_1),\dots,s_M(A_n)$ if $k\leq n$ and $\infty$ otherwise. The following result is a corollary of the generic split-conformal guarantee proved in \cref{thm:general guarantee} with $(X,\mathcal{X},\pi)=(D_M,2^{D_M},\nu_M)$ and $r=\rho_M$.

\begin{theorem}[Discrete marginal coverage]\label{thm:grid guarantee}
  Suppose that $A_1,\dots,A_{n+1}\sim\mu$ are exchangeable, then
  \[
    \mathbb{P}\!\left(
    \nu_M\bigl(\{x_j\in D_M:\mathcal{G}^\dagger(A_{n+1})(x_j)
    \in C^M_{\hat\lambda_M}(A_{n+1})(x_j)\}\bigr)
    \geq1-\gamma
    \right)
    \geq\frac{k}{n+1}\geq1-\alpha.
  \]
  Moreover, if the $A_i$ are i.i.d.\ and the distribution of $s_M(A_1)$ is atomless, then the random variable
  $\mathbb{P}(w^M_{\hat\lambda_M}(A_{n+1})\geq1-\gamma\mid A_1,\dots,A_n)$ follows a $\mathrm{Beta}(k,n+1-k)$ distribution and
  \[
    \mathbb{P}\!\left(
    \nu_M\bigl(\{x_j\in D_M:\mathcal{G}^\dagger(A_{n+1})(x_j)
    \in C^M_{\hat\lambda_M}(A_{n+1})(x_j)\}\bigr)
    \geq1-\gamma
    \right)
    =\frac{k}{n+1}\leq1-\alpha+\frac{1}{n+1}.
  \]
\end{theorem}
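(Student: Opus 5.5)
The plan is to obtain \cref{thm:grid guarantee} as a direct instantiation of \cref{thm:general guarantee}, exactly as \cref{thm:continuum guarantee} is obtained in the continuum case. I take $(X,\mathcal{X},\pi)=(D_M,2^{D_M},\nu_M)$ and $r=\rho_M$. Three hypotheses of the general theorem then need checking.

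\textbf{Hypotheses.}
\begin{enumerate}
\item \emph{Joint measurability of $\rho_M$.} Since $D_M$ is finite and carries the power set, a set $E\subseteq\mathcal{A}\times D_M$ is in $\mathcal{B}(\mathcal{A})\otimes 2^{D_M}$ iff each slice $\{a:(a,x_j)\in E\}$ is Borel. Each map $a\mapsto\rho_M(a,x_j)$ is Borel, because it is built from the Borel coordinate maps $a\mapsto\mathcal{G}^\dagger(a)(x_j)$, $\hat{\mathcal{G}}_M(a)(x_j)$ and $\hat{\mathcal{E}}_M(a)(x_j)$. These are combined through the Euclidean norm and division by a strictly positive quantity, both continuous operations.
\item \emph{Event identity.} For every $a$ and $\lambda\in[0,\infty]$,
\[
\{x_j:\mathcal{G}^\dagger(a)(x_j)\in C^M_\lambda(a)(x_j)\}=\{x_j:\rho_M(a,x_j)\le\lambda\}.
\]
This is immediate from the definitions, using $\hat{\mathcal{E}}_M>0$ and, for $\lambda=\infty$, the convention $C^M_\infty=\mathbb{R}^{d_u}$. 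Consequently the containment fraction equals $w^M_\lambda(a)$.
\item \emph{Matching score and calibration rule.} The generic score $\inf\{\lambda\ge0:\pi(r(a,\cdot)\le\lambda)\ge1-\gamma\}$ coincides with $s_M(a)$. The generic calibration rule (the $k$th order statistic of the scores, or $\infty$ if $k>n$) coincides with $\hat\lambda_M$.
\end{enumerate}

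\textbf{Conclusion.} With these checks, the first inequality follows verbatim from the exchangeable part of \cref{thm:general guarantee}. The Beta law of the conditional coverage follows from its i.i.d./atomless part, as does the exact identity $\mathbb{P}=k/(n+1)$. The final bound $k/(n+1)\le 1-\alpha+1/(n+1)$ is the elementary $\lceil(1-\alpha)(n+1)\rceil<(1-\alpha)(n+1)+1$.

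\textbf{Mechanism in the general theorem.} To make sure nothing discrete-specific is lost, I would recall what \cref{thm:general guarantee} uses. Right-continuity of $\lambda\mapsto w^M_\lambda(a)$, which holds by continuity from above of $\nu_M$, gives the key equivalence
\[
w^M_\lambda(a)\ge1-\gamma \iff s_M(a)\le\lambda .
\]
The infimum is then attained, and here it is the $q$th smallest residual with $q=\lceil(1-\gamma)M\rceil$. The event of interest is therefore $\{s_M(A_{n+1})\le\hat\lambda_M\}$.

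\begin{itemize}
\item \emph{Exchangeable case.} The scores $s_M(A_1),\dots,s_M(A_{n+1})$ are exchangeable. The standard rank argument then gives probability at least $k/(n+1)$.
\item \emph{I.i.d.\ atomless case.} Let $F$ be the continuous CDF of $s_M(A_1)$. The conditional coverage probability is $F(\hat\lambda_M)$. By the probability integral transform, $F(s_M(A_i))$ are i.i.d.\ uniform, so $F(\hat\lambda_M)$ is their $k$th order statistic and is therefore $\mathrm{Beta}(k,n+1-k)$. Its mean is $k/(n+1)$.
\end{itemize}

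\textbf{Main obstacle.} The only step needing care is the atomless hypothesis. Each $s_M(a)$ is an order statistic of finitely many residuals, so atomlessness is a genuine assumption on $\mu$ and the models; the statement assumes it, so no extra work is needed. The other point requiring care is ensuring measurability of $s_M$ and $\hat\lambda_M$ as functions of the inputs. For $s_M$ this is clear, since an order statistic of finitely many measurable functions is measurable. It is also covered by \cref{lemma:event-meas} applied on the finite space.
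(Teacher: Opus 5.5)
Your proposal is correct and follows the paper's route: the paper obtains \cref{thm:grid guarantee} as a corollary of \cref{thm:general guarantee} with $(X,\mathcal{X},\pi)=(D_M,2^{D_M},\nu_M)$ and $r=\rho_M$. It justifies this by noting that joint measurability follows from finiteness of $D_M$ and that $\mathcal{G}^\dagger(a)(x_j)\in C^M_\lambda(a)(x_j)$ holds if and only if $\rho_M(a,x_j)\le\lambda$. Your slice-wise measurability check and the probability-integral-transform derivation of the Beta law spell out what the paper leaves to the finiteness of $D_M$ and to the cited conformal-prediction reference.
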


\begin{remark}[Calibrating on the discrete score]\label{remark:dominating score continuum}
  While the continuum score $s$ from \cref{sec:continuum guarantee} may not be computable, \cref{cor:dominating score} shows that calibrating on any Borel score $\sigma:\mathcal{A}\to[0,\infty]$ satisfying $s(a)\leq\sigma(a)$ $\mu$-a.s.\ still yields marginal coverage at level $1-\alpha$ for the continuum residual field $\rho$. In particular, this opens the door to constructing computable scores that dominate $s$, thereby providing the continuum guarantee of \cref{thm:continuum guarantee} from a discrete implementation.
\end{remark}

\section{Function-Valued Split Conformal Calibration}\label{sec:methodology}

We now translate the coverage guarantees from \cref{sec:theory} into a concrete calibration procedure. Let $\mathcal{D}=\{(a_i,u_i)\}_{i=1}^{N_{\text{sample}}}\subset \mathcal{A}\times \mathcal{U}$ be a labelled dataset such that $u_i=\mathcal{G}^\dagger(a_i)$ for $1\leq i\leq N_{\text{sample}}$. We randomly split $\mathcal{D}$ into three disjoint training, residual, and calibration sets. In practice, we typically choose around $60\%$ of the data for training, $30\%$ for residual, and $10\%$ for calibration, but this depends on the difficulty of the problem. The training and residual datasets are used to train the prediction operator and error estimator $\hat{\mathcal{G}}$ and $\hat{\mathcal{E}}$, while the calibration set is held out until both operators are frozen. This separation is essential as the finite-sample guarantees from \cref{sec:theory} treat the operators as fixed and require the calibration and test inputs to be exchangeable. Although splitting reduces the data available to fit each model, operator learning has favorable sample complexity in several settings, including elliptic PDE solution operators and noisy linear operators \citep{boulle2023elliptic,dehoop2023convergence}. These results suggest that moderate holdout sets need not substantially degrade predictive accuracy, although the effect remains problem-dependent.

\begin{algorithm}[htbp]
  \caption{Function-valued split conformal calibration}\label{alg:functional conformal}
  \begin{algorithmic}[1]
    \algrenewcommand\algorithmicrequire{\textbf{Input:}}
    \Require Labelled data $\mathcal{D}=\{(a_i,u_i)\}_{i=1}^{N_{\text{sample}}}$, spatial tolerance $\gamma\in(0,1)$, and probability tolerance $\alpha\in(0,1)$
    \Statex \hspace{-\labelwidth}\hspace{-\labelsep}\textbf{Data splitting and model fitting}
    \State Split $\mathcal{D}$ into three disjoint datasets: $\mathcal{D}_{\mathrm{train}}$, $\mathcal{D}_{\mathrm{res}}$, and $\mathcal{D}_{\mathrm{cal}}=\{(a_i,u_i)\}_{i=1}^n$
    \State Train the prediction operator $\hat{\mathcal{G}}$ to approximate $u_i(x) = \mathcal{G}^\dagger(a_i)(x)$ on $\mathcal{D}_{\mathrm{train}}$ and freeze it
    \State Train the error estimator $\hat{\mathcal{E}}$ to approximate $\|u_i(x)-\hat{\mathcal{G}}(a_i)(x)\|_2$ on $\mathcal{D}_{\mathrm{res}}$ and freeze it
    \Statex \hspace{-\labelwidth}\hspace{-\labelsep}\textbf{Conformal calibration}
    \For{$i=1,\dots,n$}
    \State Compute the residual field $\displaystyle \rho_i(x)=
      \|u_i(x)-\hat{\mathcal{G}}(a_i)(x)\|_2/
      \hat{\mathcal{E}}(a_i)(x)$ for $x\in D$
    \State Set $\displaystyle S_i=\inf\bigl\{\lambda\geq0:
      \nu(\{x\in D:\rho_i(x)\leq\lambda\})\geq1-\gamma\bigr\}$ \label{line:score}
    \EndFor
    \State Set $k=\lceil(1-\alpha)(n+1)\rceil$ and select $\hat\lambda$ as the $k$th smallest value among $S_1,\dots,S_n$ if $k\leq n$, and $\hat\lambda=\infty$ otherwise
    \State \Return Scaling factor $\hat\lambda$ and prediction band operator $C_{\hat\lambda}: (a,x)\mapsto \bar{B}(\hat{\mathcal{G}}(a)(x), \hat\lambda \hat{\mathcal{E}}(a)(x))$
  \end{algorithmic}
\end{algorithm}

Once the prediction and error estimators are trained, the calibration procedure in \cref{alg:functional conformal} computes the residual field $\rho_i$ for each calibration input $a_i\in \mathcal{D}_{\text{cal}}$. The scalar score $S_i$ is defined as the smallest threshold $\lambda$ such that the residual field $\rho_i$ does not exceed $\lambda$ on at least a fraction $1-\gamma$ of the domain, i.e.\ the spatial $(1-\gamma)$-quantile of $\rho_i$ under $\nu$. We then select the scaling factor $\hat\lambda$ as the $k$th order statistic of these scores, where $k=\lceil(1-\alpha)(n+1)\rceil$. This choice ensures that the resulting prediction band $C_{\hat\lambda}(A_{n+1})$ satisfies the marginal coverage guarantee in \cref{sec:theory}. While the algorithm is stated for the continuum setting, it applies equally to the discrete setting by replacing $D$ with the discrete set $D_M$ and $\nu$ with the uniform counting measure $\nu_M$. In this case, the spatial infimum in line~\ref{line:score} reduces to the $q$th order statistic of the $M$ residual values, with $q=\lceil(1-\gamma)M\rceil$, and is computed by sorting (see \cref{subsec:grid guarantee}).
Given the calibration set, the conditional coverage, i.e., the probability that a test input $A_{n+1}$ is covered, follows a Beta distribution:
\[
  Q := \mathbb{P}_{A_{n+1}}\!\left(
  \nu\bigl(\{x\in D:\mathcal{G}^\dagger(A_{n+1})(x)\in C_{\hat\lambda}(A_{n+1})(x)\}\bigr)\geq1-\gamma
  \mid \mathcal{D}_{\text{cal}}\right) \sim\operatorname{Beta}(k,n+1-k),
\]
under the i.i.d.\ and atomless-score conditions of \cref{thm:continuum guarantee,thm:grid guarantee}. This characterization quantifies the variability of the conditional coverage across calibration sets since
$\operatorname{Var}(Q)
  =\tfrac{k(n+1-k)}{(n+1)^2(n+2)}\underset{n\to\infty}{\sim}\tfrac{\alpha(1-\alpha)}{n}$.
Hence, the standard deviation of the conditional coverage is $O(n^{-1/2})$, which decreases as the calibration-set size increases.

\section{Numerical Experiments}\label{sec:experiments}

We compare our functional conformal calibration (\cref{alg:functional conformal}) with the method of \citet{ma2024calibrated}. Both methods use the same retrained operators $(\hat{\mathcal{G}}_M,\hat{\mathcal{E}}_M)$ and calibration data to obtain a fair comparison, and produce pointwise bands $\hat{\mathcal{G}}_M(a)(x_j) \pm \hat{\lambda}\hat{\mathcal{E}}_M(a)(x_j)$ for each input $a$ and grid node $x_j$, since in this setting $d_u=1$. The neural operator predictor $\hat{\mathcal{G}}_M$ and error estimator $\hat{\mathcal{E}}_M$ are Fourier neural operators (FNOs) of identical architecture, with a final Softplus activation in $\hat{\mathcal{E}}_M$ to enforce non-negativity of the output. We perform numerical experiments on two standard benchmarks from \citep{li2021fourier}: a steady 2D Darcy flow (also used in \citealt{ma2024calibrated}) and a more challenging 2D incompressible Navier--Stokes dataset in vorticity form at $\nu=10^{-4}$. Each dataset is split once with a fixed seed into four disjoint subsets as training, residual, calibration, and test sets following \cref{sec:methodology}. We train $\hat{\mathcal{G}}_M$ with relative $L^2$ loss \citep{kossaifi2025librarylearningneuraloperators} and AdamW, then freeze it. We then train $\hat{\mathcal{E}}_M$ on pointwise residuals $\|\mathcal{G}^\dagger(a)(x_j)-\hat{\mathcal{G}}_M(a)(x_j)\|_2$ with pinball loss at level $1-\gamma$, so that it estimates the pointwise $(1-\gamma)$-quantile of the prediction error. Additional implementation details are available in \cref{app:implem details}.

Given a dataset on a two-dimensional grid at resolution $N_{\max}$, we subsample it to obtain coarser grids of resolution $N$, with $M=N^2$ number of points. At each resolution, we train a new $(\hat{\mathcal{G}}_M,\hat{\mathcal{E}}_M)$ pair and compare two calibration rules: ours (\cref{alg:functional conformal}) and \citep{ma2024calibrated}, using the same trained operators and calibration data. Throughout, containment fraction denotes the fraction of grid points for which a given input is contained in the prediction band, and coverage is the fraction of test inputs whose containment fraction is greater than $1-\gamma$. In the experiments that follow, we select $\gamma=\alpha=0.1$; we are interested in a $90\%$ containment of the grid points, over $90\%$ of calibration/test draws. We then perform $3000$ random disjoint re-splits of the calibration and test functions, recalibrate each time, and report the empirical distribution of $\hat{\lambda}$. The validity of the method is assessed using the same resampling procedure, where we compute the empirical coverage on each re-split. Finally, we evaluate the transfer of a pair calibrated at $N<N_{\max}$ to $N_{\max}$ by either spectral interpolation of coarse outputs to fine nodes, or zero-shot super-resolution.

\paragraph{Two-dimensional Darcy flow.}

In \cref{fig:darcy-main}(left), we report the median calibrated scaling factors along with the $5$th--$95$th percentiles over the $3000$ random disjoint re-splits of the calibration and test sets, with the trained operators held fixed. The calibrated factors given by our method are consistently smaller than those of \citep{ma2024calibrated}, across spatial resolutions with ratio ranging from $2.37\times$ at $N=12$ to $1.16\times$ at $N=84$ (\cref{tab:darcy-efficiency}). While the correction of \citep{ma2024calibrated} shrinks with resolution (roughly at a scale of $\sqrt{-\log\alpha/(2N^d)}$), it remains strictly conservative at practical resolutions. \cref{fig:darcy-main}(right) shows that our tighter scaling factors do not compromise validity: the empirical distribution of resampled coverage at $N=60$ closely matches the Beta-Binomial law ($\mathrm{BetaBinomial}\bigl(n_{\mathrm{test}},k,n_{\mathrm{cal}}+1-k\bigr)$ with
$k=\lceil(1-\alpha)(n_{\mathrm{cal}}+1)\rceil$) predicted by \cref{thm:grid guarantee}, indicating near-exact finite-sample calibration on this grid. In contrast, the empirical coverage obtained by \citet{ma2024calibrated} concentrates near $0.98$, which ensures validity but is substantially more conservative than the target $1-\alpha=0.9$.

\begin{figure}[htbp]
  \hspace*{-1cm}
  \centering
  \begin{overpic}[width=0.9\textwidth]{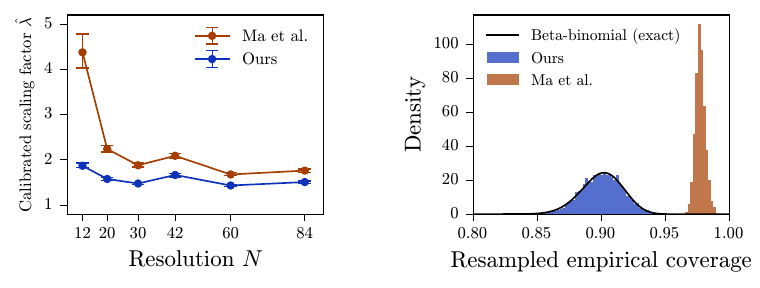}
    \put(76.3,17){\small$1-\alpha$}
  \end{overpic}
  \caption{2D Darcy flow dataset. Left: median calibrated scaling factor $\hat{\lambda}$ with respect to the spatial resolution $N$ over $3000$ random disjoint re-splits ($n_{\mathrm{cal}}=500$, $n_{\mathrm{test}}=1000$) along with empirical $5$th--$95$th percentiles; smaller $\hat{\lambda}$ indicates tighter bands. Right: empirical distribution of resampled coverage at resolution $N=60$ compared with the exact Beta-Binomial law predicted by \cref{thm:grid guarantee} at the target level $1-\alpha = 0.9$. Our distribution aligns with the predicted law, while \citep{ma2024calibrated} is shifted right with more conservative and wider bands.}
  \label{fig:darcy-main}
\end{figure}

The violin plots in \cref{fig:darcy-violins} display the same behavior across all spatial resolutions $N$ considered, with complete conservativeness (point mass near $1$) at coarse resolutions. Finally, \cref{fig:darcy-transfer} in \cref{app:additional numerical} evaluates transfer of the coverage guarantees to the fine grid. Under zero-shot super-resolution, we note that the coverage is zero at all calibration resolutions for both methods, while under spectral interpolation, coverage is near zero for $N\leq 42$ and increases with the spatial resolution. While the correction of \citep{ma2024calibrated} is designed to provide exactly this transfer by inflating the scaling factor, it only achieves the desired coverage at resolution $N = 84$. In this case, the discretization is fine enough so that no correction is needed and our method achieves the target coverage without any inflation.

\begin{figure}[t]
  \centering
  \includegraphics[width=0.9\textwidth]{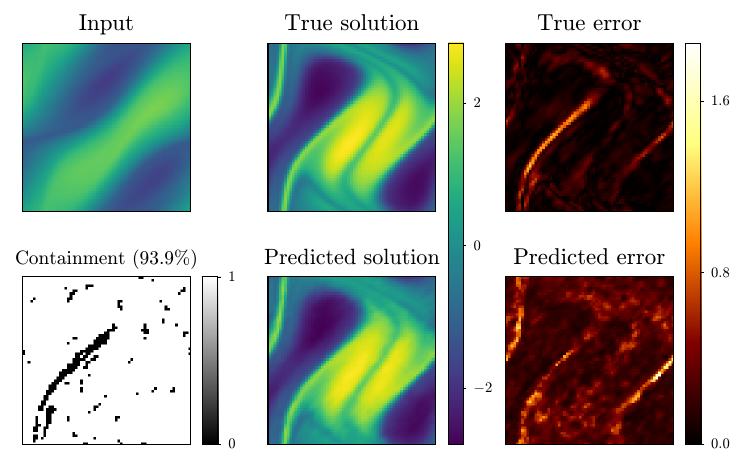}
  \caption{Navier--Stokes at $N=64$: input vorticity, solution at $t=21$, FNO prediction, pointwise error, and estimated error field for a calibrated operator pair. The estimated error tracks the spatial error pattern, enabling tight pointwise bands while maintaining the target containment fraction $1-\gamma$.}
  \label{fig:ns-vis}
\end{figure}

\paragraph{2D incompressible Navier-Stokes.}
Given an initial vorticity field, we aim to predict the solution at a later time $t=21$. The problem is more challenging than Darcy flow, with a higher prediction error (about $13\%$ relative error) and a more complex spatial structure to capture. We train the same FNO architectures as in Darcy flow, but with a larger number of Fourier modes to capture the finer details of the solution fields. We display in \cref{fig:ns-vis}, a visualization of the predicted solution for a test input, along with the pointwise error and the scaled (our calibration method) estimated error field. Additional evaluations in \cref{app:additional numerical} show that the empirical coverage distribution again follows the Beta-Binomial law predicted by \cref{thm:grid guarantee} (cf. \cref{fig:ns-main,fig:ns-violins}), and that our method consistently produces tighter bands than \citet{ma2024calibrated} across all resolutions considered. As observed in \cref{tab:ns-efficiency}, the ratio of the scaling factors produced by their method \citep{ma2024calibrated} relative to ours ranges from $1.78\times$ at $N=16$ to $1.23\times$ at $N=64$, while preserving the target coverage. The empirical coverage distribution for \citet{ma2024calibrated} is again shifted to higher values, confirming its conservative nature.

\section{Conclusion}
In this paper, we developed a split conformal framework to provide theoretically justified uncertainty quantification guarantees for neural operators. The calibration procedure constructs prediction bands based on a held-out calibration sets that are guaranteed to contain the true solution with a tolerance probability, while allowing for a controlled fraction of spatial failure over the domain. The construction reduces a measurable residual field to a spatial quantile and calibrates a single scaling factor, yielding finite-sample marginal coverage under exchangeability. The proof strategy applies to both continuum domains and fixed grids, with a discrete guarantee that does not rely on random spatial sampling or concentration corrections, and we characterized calibration-set variability via an exact Beta law under i.i.d.\ sampling and atomless scores. Empirically, on Darcy flow and Navier--Stokes equations, coverage matches the predicted behavior while achieving systematically tighter bands than \citep{ma2024calibrated}, whose corrections are more conservative. The approach is model-agnostic and applies to any neural operator architecture, provided that an appropriate error estimator is available.

\paragraph{Limitations.}
Our guarantees are marginal over the calibration and test inputs and assume their exchangeability; they do not ensure coverage under distribution shift or guarantee the target coverage conditional on every realized calibration set. Finally, the discrete guarantee is tied to the calibration grid and is not automatically invariant to the spatial resolution. \Cref{cor:dominating score} offers a framework for lifting it to the continuum, via a computable score dominating the continuum score. Constructing such a score requires regularity of the solution operator and the learned fields, is problem-dependent, and may produce conservative bands.

\bibliographystyle{plainnat_jabbrv}
\bibliography{refs}

\begin{thebibliography}{51}
\providecommand{\natexlab}[1]{#1}
\providecommand{\url}[1]{\texttt{#1}}
\expandafter\ifx\csname urlstyle\endcsname\relax
  \providecommand{\doi}[1]{doi: #1}\else
  \providecommand{\doi}{doi: \begingroup \urlstyle{rm}\Url}\fi

\bibitem[Angelopoulos and Bates(2023)]{angelopoulos2023gentle}
Anastasios~N Angelopoulos and Stephen Bates.
\newblock Conformal prediction: {A} gentle introduction.
\newblock {\em\protect\JournalTitle{Foundations and Trends in Machine
  Learning}}, 16\penalty0 (4):\penalty0 494--591, 2023.

\bibitem[Angelopoulos et~al.(2024)Angelopoulos, Barber, and
  Bates]{angelopoulos2024conformalbook}
Anastasios~N Angelopoulos, Rina~Foygel Barber, and Stephen Bates.
\newblock Theoretical foundations of conformal prediction.
\newblock {\em\protect\JournalTitle{arXiv preprint arXiv:2411.11824}}, 2024.

\bibitem[Bi et~al.(2023)Bi, Xie, Zhang, Chen, Gu, and Tian]{bi2023pangu}
Kaifeng Bi, Lingxi Xie, Hengheng Zhang, Xin Chen, Xiaotao Gu, and Qi~Tian.
\newblock Accurate medium-range global weather forecasting with {3D} neural
  networks.
\newblock {\em\protect\JournalTitle{Nature}}, 619\penalty0 (7970):\penalty0
  533--538, 2023.

\bibitem[Boull{\'e} and Townsend(2024)]{boulle2024mathematical}
Nicolas Boull{\'e} and Alex Townsend.
\newblock A mathematical guide to operator learning.
\newblock In {\em\protect\JournalTitle{Handbook of Numerical Analysis}},
  volume~25, pages 83--125. Elsevier, 2024.

\bibitem[Boull{\'e} et~al.(2023)Boull{\'e}, Halikias, and
  Townsend]{boulle2023elliptic}
Nicolas Boull{\'e}, Diana Halikias, and Alex Townsend.
\newblock Elliptic {PDE} learning is provably data-efficient.
\newblock {\em\protect\JournalTitle{Proceedings of the National Academy of
  Sciences USA}}, 120\penalty0 (39):\penalty0 e2303904120, 2023.

\bibitem[Boull{\'e} et~al.(2024)Boull{\'e}, Halikias, Otto, and
  Townsend]{boulle2024operator}
Nicolas Boull{\'e}, Diana Halikias, Samuel~E Otto, and Alex Townsend.
\newblock Operator learning without the adjoint.
\newblock {\em\protect\JournalTitle{Journal of Machine Learning Research}},
  25\penalty0 (364):\penalty0 1--54, 2024.

\bibitem[Bouziani and Boull{\'e}(2024)]{bouziani2024structure}
Nacime Bouziani and Nicolas Boull{\'e}.
\newblock Structure-preserving operator learning.
\newblock {\em\protect\JournalTitle{arXiv preprint arXiv:2410.01065}}, 2024.

\bibitem[B{\"u}lte et~al.(2025)B{\"u}lte, Scholl, and
  Kutyniok]{bulte2025probabilistic}
Christopher B{\"u}lte, Philipp Scholl, and Gitta Kutyniok.
\newblock Probabilistic neural operators for functional uncertainty
  quantification.
\newblock {\em\protect\JournalTitle{Transactions on Machine Learning
  Research}}, 2025.

\bibitem[Cao(2021)]{cao2021choose}
Shuhao Cao.
\newblock Choose a transformer: {Fourier} or {Galerkin}.
\newblock In {\em\protect\JournalTitle{Advances in Neural Information
  Processing Systems}}, volume~34, 2021.

\bibitem[de~Hoop et~al.(2023)de~Hoop, Kovachki, Nelsen, and
  Stuart]{dehoop2023convergence}
Maarten~V de~Hoop, Nikola~B Kovachki, Nicholas~H Nelsen, and Andrew~M Stuart.
\newblock Convergence rates for learning linear operators from noisy data.
\newblock {\em\protect\JournalTitle{SIAM/ASA Journal on Uncertainty
  Quantification}}, 11\penalty0 (2):\penalty0 480--513, 2023.

\bibitem[Dugas et~al.(2000)Dugas, Bengio, B{\'e}lisle, Nadeau, and
  Garcia]{dugas2000incorporating}
Charles Dugas, Yoshua Bengio, Fran{\c{c}}ois B{\'e}lisle, Claude Nadeau, and
  Ren{\'e} Garcia.
\newblock Incorporating second-order functional knowledge for better option
  pricing.
\newblock In {\em\protect\JournalTitle{Advances in Neural Information
  Processing Systems}}, volume~13, 2000.

\bibitem[Durrett(2019)]{durrett2019probability}
Rick Durrett.
\newblock \emph{Probability: Theory and Examples}.
\newblock Cambridge University Press, 5th edition, 2019.

\bibitem[Gal and Ghahramani(2016)]{gal2016dropout}
Yarin Gal and Zoubin Ghahramani.
\newblock {Dropout as a bayesian approximation: Representing model uncertainty
  in deep learning}.
\newblock In {\em\protect\JournalTitle{International Conference on Machine
  Learning}}, pages 1050--1059, 2016.

\bibitem[Gawlikowski et~al.(2023)Gawlikowski, Tassi, Ali, Lee, Humt, Feng,
  Kruspe, Triebel, Jung, Roscher, Shahzad, Yang, Bamler, and
  Zhu]{gawlikowski2023survey}
Jakob Gawlikowski, Cedrique Rovile~Njieutcheu Tassi, Mohsin Ali, Jongseok Lee,
  Matthias Humt, Jianxiang Feng, Anna Kruspe, Rudolph Triebel, Peter Jung,
  Ribana Roscher, Muhammad Shahzad, Wen Yang, Richard Bamler, and Xiao~Xiang
  Zhu.
\newblock A survey of uncertainty in deep neural networks.
\newblock {\em\protect\JournalTitle{Artif. Intell. Rev.}}, 56:\penalty0
  1513--1589, 2023.

\bibitem[Gopakumar et~al.(2024)Gopakumar, Oskarrson, Gray, Zanisi, Pamela,
  Giles, Kusner, and Deisenroth]{gopakumar2024valid}
Vignesh Gopakumar, Joel Oskarrson, Ander Gray, Lorenzo Zanisi, Stanislas
  Pamela, Daniel Giles, Matt Kusner, and Marc Deisenroth.
\newblock Valid error bars for neural weather models using conformal
  prediction.
\newblock {\em\protect\JournalTitle{arXiv preprint arXiv:2406.14483}}, 2024.

\bibitem[Gopakumar et~al.(2025)Gopakumar, Gray, Zanisi, Nunn, Giles, Kusner,
  Pamela, and Deisenroth]{gopakumar2025calibrated}
Vignesh Gopakumar, Ander Gray, Lorenzo Zanisi, Timothy Nunn, Daniel Giles, Matt
  Kusner, Stanislas Pamela, and Marc~Peter Deisenroth.
\newblock Calibrated Physics-Informed Uncertainty Quantification.
\newblock In {\em\protect\JournalTitle{International Conference on Machine
  Learning}}, 2025.

\bibitem[Gopakumar et~al.(2026)Gopakumar, Gray, Oskarsson, Zanisi, Giles,
  Kusner, Pamela, and Peter~Deisenroth]{gopakumar2026uncertainty}
Vignesh Gopakumar, Ander Gray, Joel Oskarsson, Lorenzo Zanisi, Daniel Giles,
  Matt~J Kusner, Stanislas Pamela, and Marc Peter~Deisenroth.
\newblock Uncertainty quantification of surrogate models using conformal
  prediction.
\newblock {\em\protect\JournalTitle{Machine Learning: Science and Technology}},
  7\penalty0 (1):\penalty0 015025, 2026.

\bibitem[Gray et~al.(2025)Gray, Gopakumar, Rousseau, and
  Destercke]{gray2025guaranteed}
Ander Gray, Vignesh Gopakumar, Sylvain Rousseau, and Sebastien Destercke.
\newblock Guaranteed Prediction Sets for Functional Surrogate Models.
\newblock In {\em\protect\JournalTitle{Conference on Uncertainty in Artificial
  Intelligence}}, pages 1569--1585, 2025.

\bibitem[Gupta et~al.(2021)Gupta, Xiao, and Bogdan]{gupta2021multiwavelet}
Gaurav Gupta, Xiongye Xiao, and Paul Bogdan.
\newblock Multiwavelet-based operator learning for differential equations.
\newblock In {\em\protect\JournalTitle{Advances in Neural Information
  Processing Systems}}, volume~34, 2021.

\bibitem[Hao et~al.(2023)Hao, Wang, Su, Ying, Dong, Liu, Cheng, Song, and
  Zhu]{hao2023gnot}
Zhongkai Hao, Zhengyi Wang, Hang Su, Chengyang Ying, Yinpeng Dong, Songming
  Liu, Ze~Cheng, Jun Song, and Jun Zhu.
\newblock {GNOT}: A general neural operator transformer for operator learning.
\newblock In {\em\protect\JournalTitle{International Conference on Machine
  Learning}}, 2023.

\bibitem[He et~al.(2024)He, Liu, and Xu]{he2024mgno}
Juncai He, Xinliang Liu, and Jinchao Xu.
\newblock MgNO: Efficient parameterization of linear operators via multigrid.
\newblock In {\em\protect\JournalTitle{International Conference on Learning
  Representations}}, volume 2024, pages 53409--53428, 2024.

\bibitem[He et~al.(2026)He, Jiang, Xiao, Xu, and Li]{he2026survey}
Wenchong He, Zhe Jiang, Tingsong Xiao, Zelin Xu, and Yukun Li.
\newblock A survey on uncertainty quantification methods for deep learning.
\newblock {\em\protect\JournalTitle{ACM Computing Surveys}}, 58\penalty0
  (7):\penalty0 1--35, 2026.

\bibitem[Herde et~al.(2024)Herde, Raoni{\'c}, Rohner, K{\"a}ppeli, Molinaro,
  De~Bezenac, and Mishra]{herde2024poseidon}
Maximilian Herde, Bogdan Raoni{\'c}, Tobias Rohner, Roger K{\"a}ppeli, Roberto
  Molinaro, Emmanuel De~Bezenac, and Siddhartha Mishra.
\newblock Poseidon: Efficient foundation models for {PDEs}.
\newblock In {\em\protect\JournalTitle{Advances in Neural Information
  Processing Systems}}, volume~37, pages 72525--72624, 2024.

\bibitem[Karniadakis et~al.(2021)Karniadakis, Kevrekidis, Lu, Perdikaris, Wang,
  and Yang]{karniadakis2021physics}
George~Em Karniadakis, Ioannis~G Kevrekidis, Lu~Lu, Paris Perdikaris, Sifan
  Wang, and Liu Yang.
\newblock Physics-informed machine learning.
\newblock {\em\protect\JournalTitle{Nature Reviews Physics}}, 3\penalty0
  (6):\penalty0 422--440, 2021.

\bibitem[Kossaifi et~al.(2025)Kossaifi, Kovachki, Li, Pitt, Liu-Schiaffini,
  George, Bonev, Azizzadenesheli, Berner, Duruisseaux, and
  Anandkumar]{kossaifi2025librarylearningneuraloperators}
Jean Kossaifi, Nikola Kovachki, Zongyi Li, David Pitt, Miguel Liu-Schiaffini,
  Robert~J. George, Boris Bonev, Kamyar Azizzadenesheli, Julius Berner,
  Valentin Duruisseaux, and Anima Anandkumar.
\newblock A library for learning neural operators.
\newblock {\em\protect\JournalTitle{arXiv preprint arXiv:2412.10354}}, 2025.

\bibitem[Kovachki et~al.(2021)Kovachki, Lanthaler, and
  Mishra]{kovachki2021universal}
Nikola Kovachki, Samuel Lanthaler, and Siddhartha Mishra.
\newblock On universal approximation and error bounds for Fourier neural
  operators.
\newblock {\em\protect\JournalTitle{Journal of Machine Learning Research}},
  22\penalty0 (290):\penalty0 1--76, 2021.

\bibitem[Kovachki et~al.(2023)Kovachki, Li, Liu, Azizzadenesheli, Bhattacharya,
  Stuart, and Anandkumar]{kovachki2023neural}
Nikola Kovachki, Zongyi Li, Burigede Liu, Kamyar Azizzadenesheli, Kaushik
  Bhattacharya, Andrew Stuart, and Anima Anandkumar.
\newblock Neural operator: Learning maps between function spaces with
  applications to pdes.
\newblock {\em\protect\JournalTitle{Journal of Machine Learning Research}},
  24\penalty0 (89):\penalty0 1--97, 2023.

\bibitem[Kovachki et~al.(2024)Kovachki, Lanthaler, and
  Stuart]{kovachki2024operator}
Nikola~B Kovachki, Samuel Lanthaler, and Andrew~M Stuart.
\newblock Operator learning: Algorithms and analysis.
\newblock {\em\protect\JournalTitle{Handbook of Numerical Analysis}},
  25:\penalty0 419--467, 2024.

\bibitem[Kurth et~al.(2023)Kurth, Subramanian, Harrington, Pathak, Mardani,
  Hall, Miele, Kashinath, and Anandkumar]{pathak2022fourcastnet}
Thorsten Kurth, Shashank Subramanian, Peter Harrington, Jaideep Pathak, Morteza
  Mardani, David Hall, Andrea Miele, Karthik Kashinath, and Anima Anandkumar.
\newblock {FourCastNet: Accelerating Global High-Resolution Weather Forecasting
  Using Adaptive Fourier Neural Operators}.
\newblock In {\em\protect\JournalTitle{Proceedings of the Platform for Advanced
  Scientific Computing Conference}}. ACM, 2023.

\bibitem[Lakshminarayanan et~al.(2017)Lakshminarayanan, Pritzel, and
  Blundell]{lakshminarayanan2017simple}
Balaji Lakshminarayanan, Alexander Pritzel, and Charles Blundell.
\newblock Simple and scalable predictive uncertainty estimation using deep
  ensembles.
\newblock In {\em\protect\JournalTitle{Advances in Neural Information
  Processing Systems}}, volume~30, 2017.

\bibitem[Lam et~al.(2023)Lam, Sanchez-Gonzalez, Willson, Wirnsberger,
  Fortunato, Alet, Ravuri, Ewalds, Eaton-Rosen, Hu, et~al.]{lam2023graphcast}
Remi Lam, Alvaro Sanchez-Gonzalez, Matthew Willson, Peter Wirnsberger, Meire
  Fortunato, Ferran Alet, Suman Ravuri, Timo Ewalds, Zach Eaton-Rosen, Weihua
  Hu, et~al.
\newblock Learning skillful medium-range global weather forecasting.
\newblock {\em\protect\JournalTitle{Science}}, 382\penalty0 (6677):\penalty0
  1416--1421, 2023.

\bibitem[Lanthaler et~al.(2022)Lanthaler, Mishra, and
  Karniadakis]{lanthaler2022error}
Samuel Lanthaler, Siddhartha Mishra, and George~E Karniadakis.
\newblock Error estimates for deeponets: {A} deep learning framework in
  infinite dimensions.
\newblock {\em\protect\JournalTitle{Transactions of Mathematics and its
  Applications}}, 6\penalty0 (1):\penalty0 tnac001, 2022.

\bibitem[Li et~al.(2020)Li, Kovachki, Azizzadenesheli, Liu, Stuart,
  Bhattacharya, and Anandkumar]{li2020multipole}
Zongyi Li, Nikola Kovachki, Kamyar Azizzadenesheli, Burigede Liu, Andrew
  Stuart, Kaushik Bhattacharya, and Anima Anandkumar.
\newblock Multipole graph neural operator for parametric partial differential
  equations.
\newblock In {\em\protect\JournalTitle{Advances in Neural Information
  Processing Systems}}, volume~33, pages 6755--6766, 2020.

\bibitem[Li et~al.(2021)Li, Kovachki, Azizzadenesheli, liu, Bhattacharya,
  Stuart, and Anandkumar]{li2021fourier}
Zongyi Li, Nikola~Borislavov Kovachki, Kamyar Azizzadenesheli, Burigede liu,
  Kaushik Bhattacharya, Andrew Stuart, and Anima Anandkumar.
\newblock Fourier Neural Operator for Parametric Partial Differential
  Equations.
\newblock In {\em\protect\JournalTitle{International Conference on Learning
  Representations}}, 2021.

\bibitem[Lu et~al.(2021)Lu, Jin, Pang, Zhang, and Karniadakis]{lu2021deeponet}
Lu~Lu, Pengzhan Jin, Guofei Pang, Zhongqiang Zhang, and George~Em Karniadakis.
\newblock Learning nonlinear operators via {DeepONet} based on the universal
  approximation theorem of operators.
\newblock {\em\protect\JournalTitle{Nature Machine Intelligence}}, 3:\penalty0
  218--229, 2021.

\bibitem[Ma et~al.(2024)Ma, Pitt, Azizzadenesheli, and
  Anandkumar]{ma2024calibrated}
Ziqi Ma, David Pitt, Kamyar Azizzadenesheli, and Anima Anandkumar.
\newblock Calibrated Uncertainty Quantification for Operator Learning via
  Conformal Prediction.
\newblock {\em\protect\JournalTitle{Transactions on Machine Learning
  Research}}, 2024.

\bibitem[Magnani et~al.(2025{\natexlab{a}})Magnani, Kr{\"a}mer, Eschenhagen,
  Rosasco, and Hennig]{magnani2022abno}
Emilia Magnani, Nicholas Kr{\"a}mer, Runa Eschenhagen, Lorenzo Rosasco, and
  Philipp Hennig.
\newblock Approximate Bayesian Neural Operators: Uncertainty Quantification for
  Parametric {PDE}s.
\newblock {\em\protect\JournalTitle{Transactions on Machine Learning
  Research}}, 2025{\natexlab{a}}.

\bibitem[Magnani et~al.(2025{\natexlab{b}})Magnani, Pf{\"o}rtner, Weber, and
  Hennig]{magnani2024luno}
Emilia Magnani, Marvin Pf{\"o}rtner, Tobias Weber, and Philipp Hennig.
\newblock Linearization turns neural operators into function-valued Gaussian
  processes.
\newblock In {\em\protect\JournalTitle{International Conference on Machine
  Learning}}, 2025{\natexlab{b}}.

\bibitem[Millard et~al.(2025)Millard, Lindemann, and Baheri]{millard2025split}
David Millard, Lars Lindemann, and Ali Baheri.
\newblock Split conformal prediction in the function space with neural
  operators.
\newblock {\em\protect\JournalTitle{arXiv preprint arXiv:2509.04623}}, 2025.

\bibitem[Mouli et~al.(2024)Mouli, Maddix, Alizadeh, Gupta, Stuart, Mahoney, and
  Wang]{mouli2024using}
S~Chandra Mouli, Danielle~C. Maddix, Shima Alizadeh, Gaurav Gupta, Andrew
  Stuart, Michael~W. Mahoney, and Bernie Wang.
\newblock Using Uncertainty Quantification to Characterize and Improve
  Out-of-Domain Learning for {PDE}s.
\newblock In {\em\protect\JournalTitle{International Conference on Machine
  Learning}}, 2024.

\bibitem[Moya et~al.(2025)Moya, Mollaali, Zhang, Lu, and
  Lin]{moya2025conformalized}
Christian Moya, Amirhossein Mollaali, Zecheng Zhang, Lu~Lu, and Guang Lin.
\newblock {Conformalized-deeponet: A distribution-free framework for
  uncertainty quantification in deep operator networks}.
\newblock {\em\protect\JournalTitle{Physica D: Nonlinear Phenomena}},
  471:\penalty0 134418, 2025.

\bibitem[Psaros et~al.(2023)Psaros, Meng, Zou, Guo, and
  Karniadakis]{psaros2023uncertainty}
Apostolos~F Psaros, Xuhui Meng, Zongren Zou, Ling Guo, and George~Em
  Karniadakis.
\newblock Uncertainty quantification in scientific machine learning: Methods,
  metrics, and comparisons.
\newblock {\em\protect\JournalTitle{Journal of Computational Physics}},
  477:\penalty0 111902, 2023.

\bibitem[Rahman et~al.(2023)Rahman, Ross, and Azizzadenesheli]{rahman2023uno}
Md~Ashiqur Rahman, Zachary~E. Ross, and Kamyar Azizzadenesheli.
\newblock {U-NO}: U-shaped neural operators.
\newblock {\em\protect\JournalTitle{Transactions on Machine Learning
  Research}}, 2023.

\bibitem[Raonic et~al.(2023)Raonic, Molinaro, De~Ryck, Rohner, Bartolucci,
  Alaifari, Mishra, and De~B{\'e}zenac]{raonic2023convolutional}
Bogdan Raonic, Roberto Molinaro, Tim De~Ryck, Tobias Rohner, Francesca
  Bartolucci, Rima Alaifari, Siddhartha Mishra, and Emmanuel De~B{\'e}zenac.
\newblock Convolutional neural operators for robust and accurate learning of
  {PDEs}.
\newblock In {\em\protect\JournalTitle{Advances in Neural Information
  Processing Systems}}, volume~36, pages 77187--77200, 2023.

\bibitem[Shafer and Vovk(2008)]{shafer2008tutorial}
Glenn Shafer and Vladimir Vovk.
\newblock A tutorial on conformal prediction.
\newblock {\em\protect\JournalTitle{Journal of Machine Learning Research}},
  9:\penalty0 371--421, 2008.

\bibitem[Vovk(2012)]{vovk2012conditionalvalidityinductiveconformal}
Vladimir Vovk.
\newblock Conditional validity of inductive conformal predictors.
\newblock In {\em\protect\JournalTitle{Asian Conference on Machine Learning}},
  pages 475--490. PMLR, 2012.

\bibitem[Vovk et~al.(2005)Vovk, Gammerman, and Shafer]{vovk2005algorithmic}
Vladimir Vovk, Alex Gammerman, and Glenn Shafer.
\newblock \emph{Algorithmic Learning in a Random World}.
\newblock Springer, 2005.

\bibitem[Wang et~al.(2026{\natexlab{a}})Wang, Xin, Wang, Yang, Zha, Jiang,
  et~al.]{wang2026mixture}
Hong Wang, Haiyang Xin, Jie Wang, Xuanze Yang, Fei Zha, Yan Jiang, et~al.
\newblock Mixture-of-experts operator transformer for large-scale pde
  pre-training.
\newblock In {\em\protect\JournalTitle{Advances in Neural Information
  Processing Systems}}, volume~38, pages 31498--31527, 2026{\natexlab{a}}.

\bibitem[Wang et~al.(2026{\natexlab{b}})Wang, Gang, and Deng]{wang2026operator}
Weinan Wang, Bowen Gang, and Hao Deng.
\newblock {Operator learning for the 2D incompressible Navier-Stokes equations:
  a conformal prediction approach in the data-scarce regime}.
\newblock {\em\protect\JournalTitle{arXiv preprint arXiv:2606.08654}},
  2026{\natexlab{b}}.

\bibitem[Yu et~al.(2026)Yu, Ho, and Wang]{yu2026conformal}
Yifan Yu, Cheuk~Hin Ho, and Yangshuai Wang.
\newblock A conformal prediction framework for uncertainty quantification in
  physics-informed neural networks.
\newblock {\em\protect\JournalTitle{Journal of Computational Physics}},
  561:\penalty0 114979, 2026.

\bibitem[Zou et~al.(2024)Zou, Meng, Psaros, and Karniadakis]{zou2024neuraluq}
Zongren Zou, Xuhui Meng, Apostolos~F Psaros, and George~E Karniadakis.
\newblock {NeuralUQ: A comprehensive library for uncertainty quantification in
  neural differential equations and operators}.
\newblock {\em\protect\JournalTitle{SIAM Review}}, 66\penalty0 (1):\penalty0
  161--190, 2024.

\end{thebibliography}

\appendix

\section{Technical Proofs and Auxiliary Results}

\subsection{Independence of the representatives}

We show that the measure of the containment set defined in \cref{sec:setup} does not depend on the choice of representatives for the operators $\mathcal{G}^\dagger$, $\hat{\mathcal{G}}$, and $\hat{\mathcal{E}}$.

\begin{lemma}[Representative independence]\label{lemma:rep-indep}
  Let $(g^\dagger_1,\hat g_1,\hat e_1)$ and $(g^\dagger_2,\hat g_2,\hat e_2)$ be two families of product-measurable representatives of $(\mathcal{G}^\dagger, \hat{\mathcal{G}}, \hat{\mathcal{E}})$, with corresponding prediction bands $C^1_\lambda$ and $C^2_\lambda$. Then for every $a \in \mathcal{A}$ and $\lambda \in [0,\infty]$,
  \[
    \nu\bigl(\{x \in D : g^\dagger_1(a,x) \in C^1_\lambda(a)(x)\}\bigr)
    = \nu\bigl(\{x \in D : g^\dagger_2(a,x) \in C^2_\lambda(a)(x)\}\bigr).
  \]
\end{lemma}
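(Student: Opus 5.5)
Fix $a\in\mathcal{A}$ and $\lambda\in[0,\infty]$. The plan is to show that the two containment sets differ only on a $\nu$-null set, so their $\nu$-measures coincide.

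First, since $g^\dagger_1(a,\cdot)$ and $g^\dagger_2(a,\cdot)$ both represent the same element $\mathcal{G}^\dagger(a)\in\mathcal{U}$, they agree $\nu$-a.e. Let $N^\dagger_a$ be the set where they differ; it is measurable because the sections of product-measurable maps are $\mathcal{B}(D)$-measurable, and $\nu(N^\dagger_a)=0$. In the same way, define $\hat N_a=\{x:\hat g_1(a,x)\neq\hat g_2(a,x)\}$ and $\hat N^e_a=\{x:\hat e_1(a,x)\neq\hat e_2(a,x)\}$, both $\nu$-null. (If equality in $\mathcal{U}$ is only $\mathrm{Leb}$-a.e., this is the same as $\nu$-a.e., since $\nu$ is a positive multiple of $\mathrm{Leb}$ on $D$.) Set $N_a=N^\dagger_a\cup\hat N_a\cup\hat N^e_a$, which is measurable with $\nu(N_a)=0$.

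Next, write $S_i=\{x\in D: g^\dagger_i(a,x)\in C^i_\lambda(a)(x)\}$. For $\lambda<\infty$,
\[
S_i=\{x:\|g^\dagger_i(a,x)-\hat g_i(a,x)\|_2-\lambda\hat e_i(a,x)\le 0\},
\]
which is measurable because it is the preimage of a closed set under a measurable function of $x$. For $\lambda=\infty$ we have $S_i=D$, so the claim is trivial. On $D\setminus N_a$ all three pairs of representatives coincide, so the defining inequalities agree pointwise and $S_1\setminus N_a=S_2\setminus N_a$. Therefore $S_1\triangle S_2\subseteq N_a$, and
\[
\nu(S_1)=\nu(S_1\setminus N_a)=\nu(S_2\setminus N_a)=\nu(S_2).
\]

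The main step requiring care is measurability: the sections $x\mapsto g_i(a,x)$ must be Borel so that $S_i$ and the $N$ sets are measurable. This follows from the standard fact that sections of $\mathcal{B}(\mathcal{A})\otimes\mathcal{B}(D)$-measurable maps are measurable. A second small point is that $\hat e_i\ge0$ only $\nu$-a.e.; any points where $\hat e_i<0$ can be added to $N_a$ without changing the argument.
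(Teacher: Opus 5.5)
Your proposal is correct and follows essentially the same route as the paper's proof. You dispose of $\lambda=\infty$ trivially, note that the containment sets are sections of product-measurable sets and hence Borel, and observe that the three a.e.\ equalities fail only on a common $\nu$-null set $N_a$; outside $N_a$ the defining inequalities coincide, so $S_1\triangle S_2\subseteq N_a$. (Your closing remark about points where $\hat e_i<0$ is unnecessary, since the representatives $\hat e_i$ take values in $\mathbb{R}_{\ge 0}$ by assumption and the argument never uses nonnegativity anyway.)
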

\begin{proof}
  Fix $a$ and $\lambda$; the case $\lambda = \infty$ is trivial since $C^i_\infty(a)(x) = \mathbb{R}^{d_u}$ for $i=1,2$.
  For $\lambda < \infty$ and $i = 1,2$, set $E_i := \{x \in D : g^\dagger_i(a,x) \in C^i_\lambda(a)(x)\} = \{x : \|g^\dagger_i(a,x) - \hat g_i(a,x)\|_2 \le \lambda\,\hat e_i(a,x)\}$.
  Since each factor is $\mathcal{B}(\mathcal{A}) \otimes \mathcal{B}(D)$-measurable, $E_i$ is an $a$-section of a product-measurable set, hence $E_i \in \mathcal{B}(D)$ and $\nu(E_i)$ is defined.
  Because both families represent the same equivalence classes, for each $i$ the equality $g^\dagger_1(a,\cdot) = g^\dagger_2(a,\cdot)$ holds $\nu$-a.e., and likewise for $\hat g$ and $\hat e$. These three a.e.\ equalities exclude a common $\nu$-null set $N$; on $D \setminus N$ the defining inequalities of $E_1$ and $E_2$ involve identical quantities, so $E_1 \triangle E_2 \subseteq N$ and $\nu(E_1) = \nu(E_2)$.
\end{proof}

\subsection{Conformal calibration for a generic residual field}\label{sec:general proof}

This section provides a generic calibration argument that applies to any residual field over a probability space $(X,\mathcal{X},\pi)$, including the continuum and discrete residual fields in \cref{sec:theory}. In both cases, there is a nonnegative, jointly measurable field measuring the discrepancy between prediction and truth, a probability measure on the domain over which containment is assessed, a scalar nonconformity score set as a quantile of that field, and a split-conformal calibration procedure that turns exchangeability of the scores into a coverage guarantee. The argument proceeds in four steps. First, we convert a residual field into a scalar score by taking the smallest threshold that contains at least a $1-\gamma$ fraction of the evaluation space. Second, we establish the measurability of this score and show that thresholding it is equivalent to achieving the desired containment fraction. Third, an exchangeable-rank argument calibrates the $k$th order statistic of the scores and yields marginal coverage at least $1-\alpha$. Finally, we show that the same guarantee remains valid when calibration uses a computable score that upper-bounds an ideal score.

Let $\gamma,\alpha\in(0,1)$, and $A_1,\dots,A_{n+1}$ be exchangeable with common law $\mu$ on the separable Banach space $\mathcal{A}$, for $n\geq 1$. Let $r:\mathcal{A}\times X \rightarrow [0, \infty]$ be a nonnegative $\mathcal{B}(\mathcal{A}) \otimes \mathcal{X}$-measurable map, which we call a \emph{residual field} over $(X, \mathcal{X}, \pi)$. The field is evaluated at $x\in X$ for each $a\in\mathcal{A}$, and measures the discrepancy between a prediction and a ground truth. For $a\in\mathcal{A}$ and $\lambda\in[0,\infty]$, we say that the field is contained at $x$ if $r(a,x)\leq\lambda$, and denote by
\begin{equation}
  w^r_\lambda(a)=\pi\bigl(\{x\in X:r(a,x)\leq\lambda\}\bigr)
\end{equation}
the corresponding \emph{containment fraction}. The quantity $w^r_\lambda(a)$ measures the fraction (with respect to $\pi$) of the evaluation space $X$ for which the residual field is contained by $\lambda$. The next lemma ensures that the containment fraction $w^r_\lambda(a)$ is monotone and right-continuous in $\lambda$ and measurable in $a$.

\begin{lemma}[Regularity and measurability]\label{lemma:w borel}
  For each $a\in\mathcal{A}$, the map $\lambda\mapsto w^r_\lambda(a)$ is nondecreasing and right-continuous, with $w^r_\infty(a)=1$. Moreover, for each $\lambda\in[0,\infty]$, the map $a\mapsto w^r_\lambda(a)$ is Borel measurable.
\end{lemma}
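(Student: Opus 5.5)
The plan is to handle the three claims separately, each reducing to a standard property of measures applied to the sublevel sets $E_\lambda(a):=\{x\in X: r(a,x)\le\lambda\}$.

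\emph{Monotonicity and the value at infinity.} For fixed $a$, if $\lambda\le\lambda'$ then $E_\lambda(a)\subseteq E_{\lambda'}(a)$. Monotonicity of $\pi$ then gives $w^r_\lambda(a)\le w^r_{\lambda'}(a)$. Since $r$ takes values in $[0,\infty]$, we have $E_\infty(a)=X$, so $w^r_\infty(a)=\pi(X)=1$. Each $E_\lambda(a)$ is the $a$-section of the set $\{(a,x): r(a,x)\le\lambda\}$. This set lies in $\mathcal{B}(\mathcal{A})\otimes\mathcal{X}$ because $r$ is jointly measurable, and sections of product-measurable sets are measurable, so $E_\lambda(a)\in\mathcal{X}$ and $w^r_\lambda(a)$ is well defined.

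\emph{Right-continuity.} Fix $\lambda\in[0,\infty)$ and take any sequence $\lambda_m\downarrow\lambda$. Then $E_{\lambda_m}(a)$ decreases and
\[
\bigcap_m E_{\lambda_m}(a)=\{x: r(a,x)\le\lambda_m\ \forall m\}=E_\lambda(a).
\]
The last equality holds because $r(a,x)\le\lambda_m$ for all $m$ forces $r(a,x)\le\lambda$, including the case $r(a,x)=\infty$, which simply lies in no set. Since $\pi$ is a finite measure, continuity from above gives $w^r_{\lambda_m}(a)\to w^r_\lambda(a)$. At $\lambda=\infty$ right-continuity is vacuous, or can be read as the convention that $\infty$ is an isolated right endpoint.

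\emph{Measurability in $a$.} This is the only step needing care. Set $S_\lambda=\{(a,x): r(a,x)\le\lambda\}\in\mathcal{B}(\mathcal{A})\otimes\mathcal{X}$. Then $w^r_\lambda(a)=\int_X \mathbf{1}_{S_\lambda}(a,x)\,\pi(dx)$. I will invoke the measurability part of Tonelli's theorem, equivalently the standard monotone class ($\pi$--$\lambda$) lemma: for a finite measure $\pi$ and any $S$ in the product $\sigma$-algebra, $a\mapsto\pi(S_a)$ is $\mathcal{B}(\mathcal{A})$-measurable. If a self-contained argument is preferred, I would run it as follows. Let $\mathcal{L}$ be the class of product sets $S$ for which $a\mapsto\pi(S_a)$ is measurable. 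On rectangles $B\times C$ the map is $\mathbf{1}_B(a)\pi(C)$, which is measurable. Rectangles form a $\pi$-system. $\mathcal{L}$ is closed under proper differences, using finiteness of $\pi$, and under increasing unions, by monotone convergence. Dynkin's theorem then gives $\mathcal{L}\supseteq\mathcal{B}(\mathcal{A})\otimes\mathcal{X}$. Note that no $\sigma$-finiteness issue arises, since $\pi$ is a probability measure and $\mu$ plays no role here.
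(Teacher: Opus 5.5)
Your proposal is correct and follows the paper's proof essentially step for step: monotonicity of $\pi$ on nested sublevel sets, continuity from above along $\lambda_m\downarrow\lambda$ for right-continuity, $w^r_\infty(a)=\pi(X)=1$, and the measurability part of Tonelli's theorem for $a\mapsto w^r_\lambda(a)$. Your additions (measurability of the sections, and the Dynkin $\pi$--$\lambda$ argument) only spell out what the paper takes for granted by citing Tonelli.
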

\begin{proof}
  Let $0 \le \lambda_1 \le \lambda_2$. Then
  $\{x : r(a,x) \le \lambda_1\} \subseteq \{x : r(a,x) \le \lambda_2\}$, and
  monotonicity of $\pi$ implies that $w^r_{\lambda_1}(a) \le w^r_{\lambda_2}(a)$. For the right-continuity, let $\lambda_0 \ge 0$ and $\lambda_m \downarrow
    \lambda_0$. The sets $\{x : r(a,x) \le \lambda_m\}$ are nonincreasing in $m$,
  and $\bigcap_{m \in \mathbb{N}} \{x : r(a,x) \le \lambda_m\}
    = \{x : r(a,x) \le \lambda_0\}$: the inclusion $\supseteq$ holds since
  $\lambda_m \ge \lambda_0$ for all $m$. Conversely, if $x$ lies in every set
  then $r(a,x) \le \lambda_m$ for all $m$, and letting $m \to \infty$ gives
  $r(a,x) \le \inf_m \lambda_m = \lambda_0$. Continuity from above of the
  probability measure $\pi$ yields
  $\lim_{m \to \infty} w^r_{\lambda_m}(a) = w^r_{\lambda_0}(a)$. Finally, every $x \in X$ satisfies $r(a,x) \le \infty$, so
  $w^r_\infty(a) = \pi(X) = 1$.

  For the second part, we note that $r$ is $\mathcal{B}(\mathcal{A}) \otimes \mathcal{X}$-measurable as a map into $[0,\infty]$, so $(a,x) \mapsto \mathbf{1}\{r(a,x) \le \lambda\}$ is a nonnegative product-measurable function. As a probability measure, $\pi$ is $\sigma$-finite, so Tonelli's theorem applies, and the partial integral
  $a \mapsto \int_X \mathbf{1}\{r(a,x) \le \lambda\}\, d\pi(x) = w^r_\lambda(a)$
  is $\mathcal{B}(\mathcal{A})$-measurable.
\end{proof}

Given $a\in\mathcal{A}$, we define the \emph{nonconformity score} $s^r(a)$ as the infimum of the scaling factors $\lambda$ for which the containment fraction $w^r_\lambda(a)$ is greater than $1-\gamma$:
\begin{equation}
  s^r(a) = \inf \{ \lambda \ge 0: w^r_\lambda(a) \ge 1-\gamma \},
\end{equation}
where $\inf \emptyset := +\infty$ by convention.
The right-continuity of $\lambda \mapsto w^r_\lambda(a)$ is crucial and ensures that the infimum in the definition of $s^r(a)$ is attained, so that $s^r(a) \le \lambda$ if and only if $w^r_\lambda(a) \ge 1-\gamma$, as shown by the following lemma.

\begin{lemma}[Score-containment equivalence]\label{lemma:score-containment equivalence}
  For every $a \in \mathcal{A}$ and $\lambda \in [0, \infty]$, $s^r(a) \le \lambda$ if and only if $w^r_\lambda(a) \ge 1 - \gamma$.
\end{lemma}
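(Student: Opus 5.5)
The plan is to prove both directions directly from the definition of $s^r(a)$ as an infimum, using the monotonicity and right-continuity established in \cref{lemma:w borel}. Fix $a\in\mathcal{A}$ and write $\Lambda(a)=\{\lambda\ge 0: w^r_\lambda(a)\ge 1-\gamma\}$, so that $s^r(a)=\inf\Lambda(a)$. Monotonicity of $\lambda\mapsto w^r_\lambda(a)$ shows that $\Lambda(a)$ is an up-set: if $\lambda\in\Lambda(a)$ and $\lambda'\ge\lambda$, then $\lambda'\in\Lambda(a)$. Note also that $\infty\in\Lambda(a)$ always holds, because $w^r_\infty(a)=1\ge 1-\gamma$. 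Here the infimum is taken over $[0,\infty]$, with the convention $\inf\emptyset=+\infty$ consistent with this.

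For the ``if'' direction, suppose $w^r_\lambda(a)\ge 1-\gamma$. Then $\lambda\in\Lambda(a)$, so $s^r(a)\le\lambda$ by the definition of the infimum. This is immediate and holds for $\lambda=\infty$ as well.

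For the ``only if'' direction, suppose $s^r(a)\le\lambda$. If $\lambda=\infty$, the conclusion is $w^r_\infty(a)=1\ge1-\gamma$, which is trivial. Otherwise $\lambda<\infty$, so $s^r(a)<\infty$ and the set of finite elements of $\Lambda(a)$ is nonempty. The key step is to show that the infimum is attained, i.e.\ $w^r_{s^r(a)}(a)\ge1-\gamma$. Set $s_0=s^r(a)$ and choose finite $\lambda_m\in\Lambda(a)$ with $\lambda_m\downarrow s_0$; if $s_0\in\Lambda(a)$ already, there is nothing to do. Each $w^r_{\lambda_m}(a)\ge1-\gamma$, and right-continuity from \cref{lemma:w borel} gives $w^r_{s_0}(a)=\lim_m w^r_{\lambda_m}(a)\ge 1-\gamma$, since the limit of a sequence bounded below stays bounded below. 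Then $s_0\le\lambda$ together with monotonicity yields $w^r_\lambda(a)\ge w^r_{s_0}(a)\ge1-\gamma$.

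The only delicate point is this attainment step. One has to produce a sequence decreasing to the infimum from within $\Lambda(a)$, which is possible because $\Lambda(a)\cap[0,\infty)$ is a nonempty up-set in $[0,\infty)$. Explicitly, one can take $\lambda_m=s_0+1/m$, which lies in $\Lambda(a)$ by the up-set property, since some element of $\Lambda(a)$ lies in $[s_0,s_0+1/m)$. One must also handle the edge cases $s_0=\infty$ and $\lambda=\infty$ separately, as done above; right-continuity is only needed at finite $s_0\ge0$.
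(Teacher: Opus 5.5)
Your proof is correct and follows essentially the same route as the paper: the forward direction comes straight from the definition of the infimum, and the converse shows the infimum is attained via right-continuity of $\lambda\mapsto w^r_\lambda(a)$ (\cref{lemma:w borel}) and then applies monotonicity. Your explicit sequence $\lambda_m=s_0+1/m$, justified by the up-set property, just spells out the approximating sequence the paper takes for granted. Splitting cases on $\lambda=\infty$ rather than on $s^r(a)=\infty$ is an inessential variation.
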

\begin{proof}
  If $w^r_\lambda(a)\geq1-\gamma$, then $s^r(a)\leq\lambda$ by definition (and trivially when $\lambda=\infty$). Conversely, suppose $s^r(a)\leq\lambda$. If $s^r(a)=\infty$, then $\lambda=\infty$ and $w^r_\lambda(a)=1$. Otherwise choose $t_m\downarrow s^r(a)$ with $w^r_{t_m}(a)\geq1-\gamma$. Following \cref{lemma:w borel}, the right-continuity of $\lambda\mapsto w_\lambda^r(a)$ implies that $w^r_{s^r(a)}(a) = \lim_m w^r_{t_m}(a) \ge 1-\gamma$; in particular the infimum is attained. Monotonicity then gives
  $w^r_\lambda(a) \ge w^r_{s^r(a)}(a) \ge 1-\gamma$.
\end{proof}

Set $k=\lceil(1-\alpha)(n+1)\rceil$ and for an $n$-tuple $v\in[0,\infty]^n$, denote by $T_k(v)$ its $k$th smallest entry. Split conformal calibration applies this order statistic to the calibration scores.

\begin{definition}[Calibrated scaling factor]\label{def:calib value}
  The calibration map $\hat\lambda^r : \mathcal{A}^n \to [0,\infty]$ is defined for $(a_1,\ldots,a_n)\in \mathcal{A}^n$ as $
    \hat\lambda^r(a_1,\dots,a_n) = T_k\bigl(s^r(a_1),\dots,s^r(a_n)\bigr)$ if $k \le n$, and $\hat\lambda^r(a_1,\dots,a_n) = \infty$ otherwise.
\end{definition}

We abbreviate $\hat\lambda^r := \hat\lambda^r(A_1,\dots,A_n)$ when no confusion arises, and first connect the geometric containment construction to the scalar conformal calibration. \cref{def:calib value} determines the scaling factor provided by the calibration procedure. In the rest of the section, we show that it provides the desired coverage guarantee, i.e., that the residual field is contained on at least a $1-\gamma$ fraction of $X$ with probability at least $1-\alpha$. The convention $\hat\lambda^r=\infty$ guarantees formal coverage when the requested quantile exceeds the calibration sample, but gives a vacuous band. The following mild condition ensures finite scores, and hence a finite calibrated scaling when $k\leq n$.
\begin{lemma}[Finiteness of the score]\label{lemma:finite score}
  Let $a\in\mathcal{A}$ such that $\pi(\{x : r(a,x) < \infty\}) = 1$, then $s^r(a) < \infty$. Moreover, if this holds for all $a\in\mathcal{A}$ outside a $\mu$-null set and $k \le n$, then $\hat\lambda^r < \infty$ $\mu^{\otimes n}$-almost surely.
\end{lemma}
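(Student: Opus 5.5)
The first claim follows from continuity from below of $\pi$ together with \cref{lemma:w borel}. Fix $a$ with $\pi(\{x : r(a,x)<\infty\})=1$. The sets $E_m := \{x\in X : r(a,x)\le m\}$, $m\in\mathbb{N}$, are nondecreasing and their union is $\{x : r(a,x)<\infty\}$. Continuity from below therefore gives $w^r_m(a)=\pi(E_m)\uparrow 1$ as $m\to\infty$. Since $1-\gamma<1$, there is a finite $m_0$ with $w^r_{m_0}(a)\ge 1-\gamma$. Then $m_0$ belongs to the set over which the infimum defining $s^r(a)$ is taken, so $s^r(a)\le m_0<\infty$. The case $\gamma\in(0,1)$ is exactly what makes strict inequality $1-\gamma<1$ available, so no attainment issue arises.

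For the second claim, let $N\subset\mathcal{A}$ be the $\mu$-null set outside which the hypothesis holds. By the first part, $s^r(a)<\infty$ for every $a\notin N$.

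Since $k\le n$, \cref{def:calib value} gives $\hat\lambda^r=T_k(s^r(A_1),\dots,s^r(A_n))$. This is one of the entries $s^r(A_j)$, so it is at most $\max_{1\le i\le n} s^r(A_i)$. On the event $\bigcap_{i=1}^n\{A_i\notin N\}$, every score is finite, and hence $\hat\lambda^r<\infty$.

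It remains to bound the complement by a union bound: $\mathbb{P}(\exists i\le n: A_i\in N)\le\sum_{i=1}^n\mu(N)=0$. Here we use that each $A_i$ has marginal law $\mu$; this needs only exchangeability, not independence, and in the i.i.d. case the joint law is $\mu^{\otimes n}$ as in the statement. Equivalently, the set $\{(a_1,\dots,a_n): \exists i,\ a_i\in N\}$ is a finite union of cylinder sets $\mathcal{A}^{i-1}\times N\times\mathcal{A}^{n-i}$, each of which has $\mu^{\otimes n}$-measure zero.

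The only point needing care is measurability. If $N$ is merely contained in a Borel null set, we replace it by that Borel set, so the event above is measurable. Measurability of $s^r$ itself is not needed for this argument, since we only use inclusion of events. Apart from this, no real obstacle arises.
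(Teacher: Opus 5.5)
Your proposal is correct and follows the same route as the paper. For the first claim, both arguments apply continuity from below of $\pi$ to the sets $\{x : r(a,x)\le m\}$. For the second, both combine $T_k(v)\le\max_i v_i$ with almost-sure finiteness of each calibration score. Your explicit union bound over the null set $N$, and your observation that only the common marginal law $\mu$ is used, spell out details the paper leaves implicit.
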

\begin{proof}
  Fix $a\in\mathcal{A}$ with $\pi(\{x : r(a,x) < \infty\}) = 1$. For $m\in\mathbb{N}$, the sets
  $\{x : r(a,x) \le m\}$, $m \in \mathbb{N}$ increase to
  $\{x : r(a,x) < \infty\}$, so by continuity from below of $\pi$, we have
  \[
    w^r_m(a) \xrightarrow[m\to\infty]{} \pi(\{x : r(a,x) < \infty\}) = 1 > 1-\gamma,
  \]
  where the inequality is strict because $\gamma > 0$. Hence, there exists $m\in\mathbb{N}$ such that $w^r_m(a) \ge 1-\gamma$, which implies that $s^r(a) \le m < \infty$ by \cref{lemma:score-containment equivalence}.

  For the second statement, suppose the assumption holds for all $a\in\mathcal{A}$ outside a $\mu$-null set and $k \le n$. Then $s^r(A_i) < \infty$ a.s.\ for each $i \le n$ by the first part of the lemma. Therefore, $\max_{i \le n} s^r(A_i) < \infty$ a.s., and since $T_k(v) \le \max_i v_i$ for $k \le n$, we have $
    \hat\lambda^r = T_k\bigl(s^r(A_1),\dots,s^r(A_n)\bigr)
    \le \max_{i \le n} s^r(A_i) < \infty$ $\mu^{\otimes n}$-almost surely.
\end{proof}

To apply the probabilistic arguments below, we must verify that the score, the calibrated scaling factor, and their comparison event are measurable. We formulate this for a general Borel score $\sigma$ so that the result also applies to the dominating-score calibration in \cref{cor:dominating score}.
\begin{lemma}[Event measurability]\label{lemma:event-meas}
  The score $s^r$ and the calibration map $\hat\lambda^r$ are Borel measurable. More generally, for any Borel score $\sigma:\mathcal{A}\to[0,\infty]$, let $\hat\lambda^\sigma$ be the calibration map obtained by replacing $s^r$ with $\sigma$ in \cref{def:calib value}. Then $\hat\lambda^\sigma$ is measurable, and
  \[
    E_\sigma = \bigl\{(a_1,\dots,a_{n+1})\in\mathcal{A}^{n+1}:
    s^r(a_{n+1})\leq\hat\lambda^\sigma(a_1,\dots,a_n)\bigr\}
    \in\mathcal{B}(\mathcal{A})^{\otimes(n+1)}.
  \]
\end{lemma}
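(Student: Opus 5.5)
We first show that $s^r$ is Borel by checking sublevel sets. By \cref{lemma:score-containment equivalence}, for every $\lambda\in[0,\infty)$,
\[
  \{a\in\mathcal{A}: s^r(a)\leq\lambda\}=\{a\in\mathcal{A}: w^r_\lambda(a)\geq1-\gamma\}.
\]
The right-hand side is in $\mathcal{B}(\mathcal{A})$ because $a\mapsto w^r_\lambda(a)$ is Borel (\cref{lemma:w borel}). The sets $\{[0,\lambda]:\lambda\in[0,\infty)\}$ generate the Borel $\sigma$-algebra of $[0,\infty]$, since $\{\infty\}$ is the complement of $\bigcup_m[0,m]$. Hence $s^r:\mathcal{A}\to[0,\infty]$ is Borel.

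Next we treat the calibration map for an arbitrary Borel score $\sigma$; taking $\sigma=s^r$ gives the statement for $\hat\lambda^r$. If $k>n$, then $\hat\lambda^\sigma\equiv\infty$ is constant and hence measurable. If $k\leq n$, then $\hat\lambda^\sigma=T_k\circ(\sigma\times\cdots\times\sigma)$. The map $(a_1,\dots,a_n)\mapsto(\sigma(a_1),\dots,\sigma(a_n))$ is $\mathcal{B}(\mathcal{A})^{\otimes n}/\mathcal{B}([0,\infty])^{\otimes n}$-measurable, since each coordinate is a composition of a projection with a Borel map. It remains to show that $T_k:[0,\infty]^n\to[0,\infty]$ is measurable. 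We use the identity
\[
  \{T_k(v)\leq\lambda\}=\Bigl\{\textstyle\sum_{i=1}^n\mathbf{1}\{v_i\leq\lambda\}\geq k\Bigr\},
\]
whose right-hand side is a finite union of finite intersections of measurable cylinder sets. Alternatively, $T_k$ is continuous on the compact order-topological space $[0,\infty]^n$, as a min–max of coordinates, which also gives measurability.

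For $E_\sigma$, we define $F(a_1,\dots,a_{n+1})=\bigl(s^r(a_{n+1}),\hat\lambda^\sigma(a_1,\dots,a_n)\bigr)\in[0,\infty]^2$. Each component is measurable with respect to $\mathcal{B}(\mathcal{A})^{\otimes(n+1)}$, since each is a measurable map composed with a coordinate projection. Hence $F$ is measurable into $\mathcal{B}([0,\infty])^{\otimes2}$. Then $E_\sigma=F^{-1}(\Delta)$, where $\Delta=\{(u,v):u\leq v\}$. The set $\Delta$ is closed in $[0,\infty]^2$, and $[0,\infty]^2$ is second countable, so $\mathcal{B}([0,\infty]^2)=\mathcal{B}([0,\infty])^{\otimes2}$. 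Therefore $\Delta$ is product-measurable. One can also avoid topology by writing
\[
  \Delta^c=\bigcup_{q\in\mathbb{Q}_{\ge0}}\{u>q\}\times\{v<q\}\;\cup\;\bigl(\{u=\infty\}\times\{v<\infty\}\bigr).
\]

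The main obstacle is this last step: correctly handling the extended value $\infty$ and the product-versus-Borel $\sigma$-algebra subtlety. The countable rational decomposition of $\Delta^c$ settles both concretely, and we will use it to make the argument explicit.
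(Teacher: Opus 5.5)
Your proposal is correct and follows the paper's proof essentially step for step. You get the sublevel sets of $s^r$ from \cref{lemma:score-containment equivalence} and \cref{lemma:w borel}, prove measurability of $T_k$ via the ``at least $k$ coordinates are $\le\lambda$'' identity (the paper writes the same set as $\bigcup_{|I|=k}\bigcap_{i\in I}\{v_i\le t\}$), and obtain $E_\sigma$ as the preimage of the closed set $\{u\le v\}$. You also make explicit two details the paper leaves implicit: the constant case $k>n$, and the identification $\mathcal{B}([0,\infty]^2)=\mathcal{B}([0,\infty])^{\otimes 2}$, which you justify by second countability or by your rational decomposition of $\Delta^c$.
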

\begin{proof}
  By combining \cref{lemma:score-containment equivalence,lemma:w borel}, we find that the sublevel sets of $s^r$ are Borel:
  $\{a\in\mathcal{A}:s^r(a)\leq\lambda\}=\{a\in\mathcal{A}:w^r_\lambda(a)\geq1-\gamma\}$ is Borel for every finite $\lambda$ and equal to $\mathcal{A}$ if $\lambda=\infty$, hence $s^r$ is Borel. Moreover, if $k\leq n$, then
  \[
    \{v\in[0,\infty]^n:T_k(v)\leq t\}
    =\bigcup_{\substack{I\subseteq\{1,\dots,n\}\\|I|=k}}
    \bigcap_{i\in I}\{v\in[0,\infty]^n:v_i\leq t\}.
  \]
  Each set $\{v\in[0,\infty]^n:v_i\leq t\}$ is Borel because the coordinate projection $v\mapsto v_i$ is continuous. Thus the right-hand side is Borel for every $t$ as a finite union of finite intersections of measurable sets, so $T_k$ is measurable. Since $\sigma$ is Borel, the coordinatewise score map $(a_1, \dots, a_n) \mapsto (\sigma(a_1), \dots, \sigma(a_n))$ is
  $\mathcal{B}(\mathcal{A})^{\otimes n}$-measurable, since a map into a product space is measurable if and only if each coordinate map is, and each coordinate is $\sigma$ composed with a coordinate projection. Therefore, the composition $\hat\lambda^\sigma = T_k \circ (\sigma \times \dots \times \sigma)$ is measurable. Finally, this implies that the map $(a_1,\dots,a_{n+1}) \mapsto \bigl(s^r(a_{n+1}), \hat\lambda^\sigma(a_1,\dots,a_n)\bigr) \in [0,\infty]^2$ is $\mathcal{B}(\mathcal{A})^{\otimes(n+1)}$-measurable, and $\{(u,v) \in [0,\infty]^2 : u \le v\}$ is closed, hence Borel, in $[0,\infty]^2$. Then its preimage $E_\sigma$ lies in $\mathcal{B}(\mathcal{A})^{\otimes(n+1)}$.
\end{proof}

The following lemma formalizes the decomposition of the marginal coverage probability in \eqref{eq:marginal guarantee} into an average of conditional coverage probabilities. Conditioning on the calibration set freezes $\hat{\lambda}$ at a fixed number, so the only randomness left is the draw of the test input. The conditional probability of coverage is then simply the proportion of inputs whose score falls below that number, that is, the score distribution function evaluated at $\hat{\lambda}$. Thus $F(\hat{\lambda}^r)$ varies across random draws of the calibration set, but is fixed once a particular calibration set is realized. The marginal guarantee controls the mean of this quantity over calibration sets; it does not require the conditional coverage for every realized calibration set to be at least $1-\alpha$.

\begin{proposition}[Conditional coverage identity]\label{lemma:tower}
  Suppose $(A_1,\dots,A_n)$ is independent of $A_{n+1} \sim \mu$, and for $\lambda \in [0,\infty]$, denote by $F(\lambda) = \mu(\{a : s^r(a) \le \lambda\})$, the distribution function of the score $s^r(A_{n+1})$. Then, for almost every realization of the calibration sample $(A_1,\dots,A_n)$ with respect to its law (which is $\mu^{\otimes n}$ when the calibration inputs are i.i.d.),
  \[
    \mathbb{P}\bigl( w^r_{\hat\lambda^r}(A_{n+1}) \ge 1-\gamma \,\bigm|\,
    A_1,\dots,A_n \bigr) = F(\hat\lambda^r),\]
  hence $
    \mathbb{P}\bigl(s^r(A_{n+1})\leq\hat\lambda^r\bigr)
    = \mathbb{E}\bigl[F(\hat\lambda^r)\bigr]$.
\end{proposition}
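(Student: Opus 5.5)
The plan is to reduce the event to a comparison of two independent random variables and then apply the freezing (substitution) lemma for conditional expectations, as in \citet[Ch.~4]{durrett2019probability}.

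First, by \cref{lemma:score-containment equivalence} applied pointwise with $\lambda=\hat\lambda^r(A_1,\dots,A_n)$, we have
\[
\{w^r_{\hat\lambda^r}(A_{n+1})\ge 1-\gamma\}=\{s^r(A_{n+1})\le \hat\lambda^r\}=\{(A_1,\dots,A_{n+1})\in E_{s^r}\},
\]
where $E_{s^r}$ is the event of \cref{lemma:event-meas} with $\sigma=s^r$. That lemma guarantees this set lies in $\mathcal{B}(\mathcal{A})^{\otimes(n+1)}$, so the conditional probability is well defined.

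Next, write $\mathbf{1}_{E_{s^r}}(a_1,\dots,a_{n+1})=h\bigl(\hat\lambda^r(a_1,\dots,a_n),a_{n+1}\bigr)$ with $h(t,a)=\mathbf{1}\{s^r(a)\le t\}$. The function $h$ is jointly measurable on $[0,\infty]\times\mathcal{A}$, because it is the preimage of the closed set $\{u\le t\}$ under the measurable map $(t,a)\mapsto(t,s^r(a))$. Since $Y:=\hat\lambda^r$ is $\sigma(A_1,\dots,A_n)$-measurable and $A_{n+1}$ is independent of $(A_1,\dots,A_n)$, the freezing lemma gives
\[
\mathbb{E}[h(Y,A_{n+1})\mid A_1,\dots,A_n]=g(Y)\quad\text{a.s.},
\]
where $g(t)=\mathbb{E}[h(t,A_{n+1})]=\mu(\{a:s^r(a)\le t\})=F(t)$.

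If the freezing lemma is not taken as given, I would prove it directly. For any $B\in\mathcal{B}(\mathcal{A})^{\otimes n}$, Fubini's theorem with respect to the product law $\mathcal{L}(A_1,\dots,A_n)\otimes\mu$ gives
\[
\mathbb{E}\bigl[\mathbf{1}_B(A_{1:n})\,h(Y,A_{n+1})\bigr]=\mathbb{E}\bigl[\mathbf{1}_B(A_{1:n})\,F(Y)\bigr].
\]
Here independence is exactly what makes the joint law a product measure. In addition, $F(\hat\lambda^r)$ is $\sigma(A_{1:n})$-measurable, because $F$ is a nondecreasing, hence Borel, function and $\hat\lambda^r$ is measurable by \cref{lemma:event-meas}. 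This verifies the defining property of conditional expectation and yields the first identity almost surely with respect to the law of the calibration sample, which is $\mu^{\otimes n}$ in the i.i.d.\ case.

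Finally, the second identity follows by taking expectations of the first via the tower property, together with the event equality established above. The main technical point is the joint measurability of $h$ and the justification of Fubini's theorem on the product law. Everything else has already been handled by \cref{lemma:event-meas,lemma:score-containment equivalence}.
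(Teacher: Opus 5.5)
Your proposal is correct and takes essentially the same route as the paper. Both arguments obtain the event identity from \cref{lemma:score-containment equivalence} and measurability from \cref{lemma:event-meas}. Both then apply the independence (freezing) lemma of \citet[Ex.~4.1.7]{durrett2019probability} to get $F(\hat\lambda^r)$, and finish with the tower property. The differences are cosmetic: you freeze the scalar $\hat\lambda^r$ rather than the full calibration vector $V$, and you spell out the Fubini verification and the Borel measurability of $F$, which the paper leaves implicit.
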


\begin{proof}
  Let $V = (A_1,\dots,A_n)$ and define $h:\mathcal{A}^n\times\mathcal{A}\to\{0,1\}$ by
  $h(v,w) := \mathbf{1}\{s^r(w) \le \hat\lambda^r(v)\}$ for $v\in\mathcal{A}^n$ and $w\in\mathcal{A}$. This function is bounded and jointly measurable by \cref{lemma:event-meas}. For every $(v,w)\in\mathcal{A}^n\times\mathcal{A}$, \cref{lemma:score-containment equivalence} applied with $\lambda=\hat\lambda^r(v)$ implies that $w^r_{\hat\lambda^r(v)}(w)\geq1-\gamma$ if and only if $s^r(w)\leq\hat\lambda^r(v)$. Therefore, we have the following equivalence of events:
  \[
    \{w^r_{\hat\lambda^r(V)}(A_{n+1})\geq1-\gamma\}
    =\{s^r(A_{n+1})\leq\hat\lambda^r(V)\}
    =\{h(V,A_{n+1})=1\}.
  \]
  Since $V$ and $A_{n+1}$ are independent and $h$ is bounded and measurable, the standard independence lemma for conditional expectations (Fubini plus a monotone class argument; \citealt[Ex.~4.1.7]{durrett2019probability}) implies that the following equality holds almost surely,
  \[
    \mathbb{E}\bigl[h(V,A_{n+1}) \mid V\bigr]
    = \int_{\mathcal{A}} h(V,w)\, \mu(\mathrm{d}w)
    = \mu\bigl(\{w : s^r(w) \le \hat\lambda^r(V)\}\bigr)
    = F(\hat\lambda^r).
  \]
  The second identity is the tower property of conditional expectation
  $\mathbb{P}(E_{s^r}) = \mathbb{E}\bigl[\mathbb{E}[\mathbf{1}_{E_{s^r}} \mid
        A_1,\dots,A_n]\bigr]$, where $E_{s^r} = \{s^r(A_{n+1})\leq\hat\lambda^r\}$ is the event defined in \cref{lemma:event-meas}.
\end{proof}

We can now apply an exchangeable-rank argument to the scalar scores and translate the result back to the containment score using \cref{lemma:score-containment equivalence}.

\begin{theorem}[Marginal coverage guarantee]\label{thm:general guarantee}
  Let $r:\mathcal{A}\times X \rightarrow [0, \infty]$ be a residual field over $(X,\mathcal{X}, \pi)$, select $\gamma,\alpha \in (0,1)$ as the spatial and probability tolerances, and suppose $A_1,\dots,A_{n+1}$ are exchangeable. Let $k = \lceil (n+1)(1-\alpha) \rceil$ and consider the scaling factor $\hat{\lambda}^r$ defined as the $k$th smallest value among the scores $s^r(A_1),\dots,s^r(A_n)$ if $k \le n$, and $\hat{\lambda}^r = \infty$ otherwise. Then,
  \[
    \mathbb{P}\bigl( w^r_{\hat\lambda^r}(A_{n+1}) \ge 1-\gamma \bigr)
    \ge \frac{k}{n+1} \ge 1-\alpha ,
  \]
  that is the coverage is greater than $1-\alpha$. Moreover, if the $A_i$ are i.i.d.\ and the law of $s^r(A_1)$ is atomless, then the random variable $\mathbb{P}\bigl( w^r_{\hat\lambda^r}(A_{n+1}) \ge 1-\gamma \,\bigm|\,
    A_1,\dots,A_n \bigr)$ follows a $\mathrm{Beta}(k,\, n+1-k)$ distribution, in particular
  \[
    \mathbb{P}\bigl( w^r_{\hat\lambda^r}(A_{n+1}) \ge 1-\gamma \bigr)
    = \mathbb{E}[\mathbb{P}\bigl( w^r_{\hat\lambda^r}(A_{n+1}) \ge 1-\gamma \,\bigm|\,
      A_1,\dots,A_n \bigr)]=  \frac{k}{n+1} \le 1-\alpha+\frac{1}{n+1} .
  \]
\end{theorem}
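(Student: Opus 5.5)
By \cref{lemma:score-containment equivalence}, the event $\{w^r_{\hat\lambda^r}(A_{n+1})\ge 1-\gamma\}$ coincides with $\{S_{n+1}\le \hat\lambda^r\}$, where $S_i:=s^r(A_i)$. By \cref{lemma:event-meas}, this event is measurable. So the whole theorem reduces to a statement about the scalar scores $S_1,\dots,S_{n+1}\in[0,\infty]$. These are exchangeable, since $s^r$ is Borel and is applied coordinatewise to an exchangeable vector.

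The case $k>n$ is trivial. Then $\hat\lambda^r=\infty$, the probability is $1$, and $k=n+1$ gives $k/(n+1)=1$. So assume $k\le n$.

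\textbf{Lower bound.} Let $T_k^{(n+1)}$ denote the $k$th smallest of all $n+1$ scores. I will check deterministically that $S_{n+1}\le T_k(S_1,\dots,S_n)$ if and only if $S_{n+1}\le T_k^{(n+1)}$.

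For the forward direction, suppose $S_{n+1}\le T_k(S_{1:n})$. Inserting $S_{n+1}$ into the sorted calibration list shifts the $k$th entry down to at most $\max(S_{n+1},\cdot)$. Hence $T_k^{(n+1)}=T_k(S_{1:n})\ge S_{n+1}$ when $S_{n+1}$ ranks at or below position $k$, and otherwise $T_k^{(n+1)}=T_{k-1}$-type values that still dominate. For the converse, if $S_{n+1}\le T^{(n+1)}_k$, then since $T^{(n+1)}_k\le T_k(S_{1:n})$ always holds, the claim follows.

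Consequently, $\mathbb{P}(S_{n+1}\le\hat\lambda^r)=\mathbb{P}(S_{n+1}\le T_k^{(n+1)})$. By exchangeability this equals $\frac{1}{n+1}\sum_{i}\mathbb{P}(S_i\le T_k^{(n+1)})=\frac{1}{n+1}\mathbb{E}\,\#\{i:S_i\le T_k^{(n+1)}\}$. The count inside the expectation is always at least $k$, even with ties and infinite values. This gives the bound $\ge k/(n+1)\ge 1-\alpha$. The main care here is handling ties and the value $+\infty$, which is why I route the argument through the count rather than through strict ranks.

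\textbf{Beta law.} Now assume the $A_i$ are i.i.d. By \cref{lemma:tower}, the conditional probability equals $F(\hat\lambda^r)=F(T_k(S_1,\dots,S_n))$, where $F$ is the distribution function of $S_1$ on $[0,\infty]$. Because $F$ is nondecreasing, $F(T_k(S_{1:n}))=T_k(F(S_1),\dots,F(S_n))$. Because the law of $S_1$ is atomless, $F$ is continuous and the probability integral transform applies: $U_i:=F(S_i)$ are i.i.d.\ $\mathrm{Unif}(0,1)$. To verify this, note that $\mathbb{P}(F(S_1)\le u)=u$ follows from continuity of $F$ together with the generalized-inverse argument; an atom at $+\infty$ is excluded by the atomless assumption.

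Hence the conditional coverage equals the $k$th order statistic of $n$ i.i.d.\ uniforms, which is $\mathrm{Beta}(k,n+1-k)$. Its mean is $k/(n+1)$, and the tower identity in \cref{lemma:tower} turns this into the marginal probability. Finally, $k<(1-\alpha)(n+1)+1$ gives the upper bound $1-\alpha+1/(n+1)$.

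I expect the probability integral transform step to be the main technical point. Specifically, one must confirm that continuity of $F$ on the extended half-line yields exact uniformity of the $U_i$, and that $F\circ T_k=T_k\circ F$ holds without strict monotonicity of $F$.
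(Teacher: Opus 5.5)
Your proposal is correct and takes the same route as the paper. You reduce to the exchangeable scalar scores $S_i=s^r(A_i)$ via \cref{lemma:score-containment equivalence,lemma:event-meas}, then use the split-conformal rank bound and, in the i.i.d.\ atomless case, \cref{lemma:tower} plus the probability integral transform for the Beta law; the only difference is that you prove inline the standard facts the paper cites from \citet{angelopoulos2024conformalbook}, and your insertion argument for $S_{n+1}\le T_k(S_1,\dots,S_n)\Rightarrow S_{n+1}\le T_k^{(n+1)}$ is garbled and is cleaner by contraposition: if $S_{n+1}>T_k^{(n+1)}$, then at least $k$ of $S_1,\dots,S_n$ are $\le T_k^{(n+1)}$, so $T_k(S_1,\dots,S_n)\le T_k^{(n+1)}<S_{n+1}$.
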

\begin{proof}
  Following \cref{lemma:event-meas} with $\sigma = s^r$, the set $E_{s^r}$ defined in the lemma's statement is an event in $\mathcal{B}(\mathcal{A})^{\otimes(n+1)}$ which coincides with $\{w^r_{\hat\lambda^r}(A_{n+1}) \ge 1-\gamma\}$ by \cref{lemma:score-containment equivalence}. We define $S_i := s^r(A_i)$ so that $E_{s^r} = \{S_{n+1} \le \hat\lambda^r\}$. If $k > n$ then $\hat\lambda^r = \infty$ and $\mathbb{P}(E_{s^r}) = 1$
  trivially. Otherwise, $\hat\lambda^r = T_k(S_1,\dots,S_n)$, the $k$th smallest value
  of the calibration scores. The variables $S_1,\dots,S_{n+1}$ are exchangeable,
  being the image of an exchangeable vector under the fixed Borel map $s^r$
  applied coordinatewise. The split conformal coverage theorem
  \citep[Fact~2.15(i) \& Prop.~3.12]{angelopoulos2024conformalbook} implies that
  \[
    \mathbb{P}\bigl(\{w^r_{\hat\lambda^r}(A_{n+1}) \ge 1-\gamma\}\bigr)=\mathbb{P}\bigl(S_{n+1} \le T_k(S_1,\dots,S_n)\bigr) \ge \frac{k}{n+1}
    \ge 1-\alpha .
  \]
  For the second claim, if the $A_i$ are i.i.d.\ then $S_1,\dots,S_{n+1}$ are
  i.i.d., and since their law is atomless they are pairwise
  distinct almost surely and the probability distribution function, defined in \cref{lemma:tower}, $F(\hat\lambda^r) = \mathbb{P}\bigl( w^r_{\hat\lambda^r}(A_{n+1}) \ge 1-\gamma \,\bigm|\,
    A_1,\dots,A_n \bigr)$ is continuous. Then, \citep[Sec.~4.1]{angelopoulos2024conformalbook} implies that $F(\hat\lambda^r) \sim \mathrm{Beta}(k,\, n+1-k)$.
\end{proof}

\cref{thm:general guarantee} quantifies the calibration-set to calibration-set variability that the marginal guarantee averages over, and is the starting point for calibration-conditional guarantees \citep{vovk2012conditionalvalidityinductiveconformal}. In certain applications, the idealized residual field may induce a nonconformity score that cannot be computed directly. However, if this score admits a computable upper bound, the following corollary shows that calibrating on the upper bound preserves the desired coverage guarantee, although the resulting prediction band may be conservative.

\begin{corollary}[Calibration with a dominating score]\label{cor:dominating score}
  Let $r:\mathcal{A}\times X \rightarrow [0, \infty]$ be a residual field over $(X,\mathcal{X}, \pi)$ and
  $\sigma : \mathcal{A} \to [0,\infty]$ be Borel with $s^r(a) \le \sigma(a)$ for $\mu$-a.e.\ $a\in\mathcal{A}$. Calibrate on $\sigma$, i.e., set
  $\hat\lambda^\sigma$ as the $k$th smallest value among $\sigma(A_1),\dots,\sigma(A_n)$ if $k \le n$ and $\hat\lambda^\sigma := \infty$ otherwise. Then if $A_1,\dots,A_{n+1}$ are exchangeable,
  \[
    \mathbb{P}\bigl( w^r_{\hat\lambda^\sigma}(A_{n+1}) \ge 1-\gamma \bigr)
    \ge 1-\alpha .
  \]
\end{corollary}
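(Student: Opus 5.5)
The argument rests on a monotonicity observation followed by the standard exchangeable-rank bound applied to the dominating scores $\Sigma_i:=\sigma(A_i)$. By \cref{lemma:event-meas}, the map $\hat\lambda^\sigma$ is measurable, and the event
\[
E_\sigma=\{s^r(A_{n+1})\le\hat\lambda^\sigma\}
\]
belongs to $\mathcal{B}(\mathcal{A})^{\otimes(n+1)}$. By \cref{lemma:score-containment equivalence}, applied pointwise with $\lambda=\hat\lambda^\sigma(A_1,\dots,A_n)$, this event coincides with $\{w^r_{\hat\lambda^\sigma}(A_{n+1})\ge1-\gamma\}$. It therefore suffices to lower bound $\mathbb{P}(E_\sigma)$.

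If $k>n$, then $\hat\lambda^\sigma=\infty$ and $\mathbb{P}(E_\sigma)=1$, so we may assume $k\le n$.

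Let $N:=\{a:s^r(a)>\sigma(a)\}$. This set is Borel, since both maps are Borel, and $\mu(N)=0$. Because each $A_{n+1}$ has marginal law $\mu$, we get $\mathbb{P}(A_{n+1}\in N)=0$. On the complement of this null event,
\[
\{\Sigma_{n+1}\le\hat\lambda^\sigma\}\subseteq\{s^r(A_{n+1})\le\hat\lambda^\sigma\},
\]
and hence $\mathbb{P}(E_\sigma)\ge\mathbb{P}(\Sigma_{n+1}\le T_k(\Sigma_1,\dots,\Sigma_n))$.

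Only the test point requires the domination. The calibration threshold is built from $\sigma$ itself, so no a.e. condition on $A_1,\dots,A_n$ is needed.

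The remaining step reuses the argument of \cref{thm:general guarantee} with $s^r$ replaced by $\sigma$. The vector $(\Sigma_1,\dots,\Sigma_{n+1})$ is exchangeable, being the image of an exchangeable vector under a fixed Borel map applied coordinatewise. The split conformal coverage fact \citep[Fact~2.15(i) \& Prop.~3.12]{angelopoulos2024conformalbook} then gives
\[
\mathbb{P}\bigl(\Sigma_{n+1}\le T_k(\Sigma_1,\dots,\Sigma_n)\bigr)\ge\frac{k}{n+1}\ge1-\alpha.
\]
This inequality allows ties and values equal to $+\infty$. If one prefers a self-contained proof, note that $\Sigma_{n+1}>T_k(\Sigma_1,\dots,\Sigma_n)$ forces $\Sigma_{n+1}$ to be strictly larger than at least $k$ of the other $n$ scores. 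Summing over indices and using exchangeability, at most $n+1-k$ of the $n+1$ positions can have this property simultaneously. This yields $\mathbb{P}\le (n+1-k)/(n+1)$ for the failure event.

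The main obstacle is essentially measure-theoretic bookkeeping. First, we must check that $N$ is measurable, which follows from Borel measurability of $s^r$ via \cref{lemma:event-meas}. Second, we must ensure the null-set argument only concerns the marginal law of $A_{n+1}$, which exchangeability supplies. Everything else is inherited from the preceding results.
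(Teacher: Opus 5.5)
Your proposal is correct and follows essentially the same route as the paper's proof. The paper likewise applies the exchangeable-rank bound to the scores $\sigma(A_i)$, uses the $\mu$-a.e.\ domination $s^r\le\sigma$ only at the test point (up to a null set), and concludes via \cref{lemma:score-containment equivalence}, with measurability from \cref{lemma:event-meas}. Your explicit Borel null set $N$ and the tie-robust counting argument spell out what the paper leaves implicit or cites.
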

\begin{proof}
  The variables $\sigma(A_1),\dots,\sigma(A_{n+1})$ are exchangeable, being the coordinatewise image of an exchangeable vector under the fixed Borel map $\sigma$, so exactly as in \cref{thm:general guarantee}, $\mathbb{P}\bigl( \sigma(A_{n+1} ) \le \hat\lambda^\sigma \bigr) \ge
    k/(n+1) \ge 1-\alpha$. Up to a $\mu^{\otimes(n+1)}$-null set, on this event $s^r(A_{n+1}) \le \sigma(A_{n+1}) \le \hat\lambda^\sigma$, and \cref{lemma:score-containment equivalence} implies that $w^r_{\hat\lambda^\sigma}(A_{n+1}) \ge 1-\gamma$. Measurability of the events follows from \cref{lemma:event-meas}.
\end{proof}

\begin{remark}[One-sidedness of the calibration]\label{remark:one-sided}
  If a computable quantity is guaranteed to overestimate the true score of $\mu$-almost everywhere input, then calibrating on it inherits the coverage guarantee
  for the true residual field. No upper bound accompanies \cref{cor:dominating score}, since the domination $s^r \le \sigma$ is one-sided, so the coverage may exceed $1-\alpha$ by more than $1/(n+1)$. This conservativeness is the price of calibrating on an alternative score.
\end{remark}

The results of this section depend only on the measurable structure of $\mathcal{A}$, the fact that $\pi$ is a probability measure on $(X,\mathcal{X})$, and the joint measurability of the residual field. They are otherwise agnostic to the models and error estimators used to construct that field, and therefore apply equally to non-FNO architectures, alternative pointwise error estimators, non-uniform grids, and evaluation spaces unrelated to the physical domain $D$. These results guarantee coverage but do not control the magnitude of $\hat\lambda^r$ or the width of the resulting uncertainty bands. Such efficiency depends on how informative the chosen residual field is, rather than on the conformal calibration argument itself.

\section{Implementation and Additional Results}\label{app:experiments}

\subsection{Implementation Details}\label{app:implem details}

\paragraph{Data.}
Both problems use the benchmark datasets released with the FNO \citep{li2021fourier}. For Darcy flow we use the $421 \times 421$ steady-state data (the same data used by \citealp{ma2024calibrated}), with constant forcing $f \equiv 1$ and zero Dirichlet boundary conditions. We discard the duplicated final row and column to obtain $N_{\max} = 420$ points per axis. For Navier-Stokes we use the $\nu = 10^{-4}$ vorticity data at $64 \times 64$ ($10\,000$ trajectories), together with the companion set of $20$ trajectories at $256 \times 256$ used as the fine reference grid, and learn the map from vorticity at $t = 11$ to $t = 21$. Working resolutions are obtained by strided subsampling of the reference grid, so every resolution sees the same functions. Each dataset is partitioned once, with a fixed seed (42), into four disjoint subsets (Table~\ref{tab:app-config}). The partition is drawn uniformly at random over
sample indices. The Navier–Stokes data are from the FNO release (\url{https://drive.google.com/drive/folders/1UnbQh2WWc6knEHbLn-ZaXrKUZhp7pjt-}, accessed August 2026), the Darcy data were obtained via the neuraloperator library's DarcyDataset.

\paragraph{Architecture and training.}
The prediction and error operators share the architecture of Table~\ref{tab:app-config} and differ only in their output activation: the error operator applies a final Softplus to ensure nonnegativity. The
prediction operator is trained with the relative $L^2$ loss from the Neural Operator library \citep{kossaifi2025librarylearningneuraloperators}, then frozen. The error operator is trained on its pointwise residual fields with the pinball loss at level $1 - \gamma = 0.9$. Both use AdamW with a constant learning rate and no schedule. Error targets are rescaled by a single constant computed from the error training set before fitting, for numerical conditioning. The constant
is fixed before calibration and absorbed into the calibrated factor, so it does not affect validity. Hyperparameters were not tuned per method or per calibration rule: a single configuration was chosen per problem so that all comparisons are between calibration rules applied to identical trained operators. Mode counts and epochs were chosen following standard FNO settings.

\begin{table}[h]
  \centering
  \caption{Configuration for both problems. Fourier modes are given per axis and are capped by the grid size at each resolution.}
  \label{tab:app-config}
  \begin{tabular}{lcc}
    \toprule
                         & Darcy flow                       & Navier--Stokes \\
    \midrule
    Resolutions $N$      & $12, 20, 30, 42, 60, 84$         & $16, 32, 64$   \\
    Fourier modes        & $10$ at $N{=}12$; $16$ otherwise & $16, 24, 32$   \\
    Hidden channels      & $32$                             & $32$           \\
    Fourier layers       & $4$                              & $4$            \\
    Lifting / projection & $64$ / $64$                      & $64$ / $64$    \\
    Optimiser            & AdamW                            & AdamW          \\
    Learning rate        & $10^{-3}$                        & $10^{-3}$      \\
    Weight decay         & $10^{-4}$                        & $10^{-4}$      \\
    Batch size           & $16$                             & $32$           \\
    Epochs               & $75$                             & $75$           \\
    $(\gamma, \alpha)$   & $(0.1, 0.1)$                     & $(0.1, 0.1)$   \\
    \midrule
    Prediction training  & $2000$                           & $5000$         \\
    Error training       & $1000$                           & $3000$         \\
    Calibration          & $500$                            & $1000$         \\
    Test                 & $1000$                           & $1000$         \\
    Resampling split     & $(500, 1000)$                    & $(500, 1500)$  \\
    \bottomrule
  \end{tabular}
\end{table}

\paragraph{Calibration and evaluation.}
Both calibration rules are applied to the same trained operators and the same calibration set, so all reported differences are attributable to the rules alone. For the resampled validity and efficiency experiments, the calibration and test sets are pooled and re-split $T = 3000$ times at the sizes given in Table~\ref{tab:app-config}, using identical permutations for both methods.

\paragraph{Compute.}
All training and evaluation was run on CPU on a single 2024 MacBook Pro (Apple M4 chip, 24GB memory). No GPU or cluster resources were used. Training one prediction/error pair takes approximately 5 minutes at the coarsest and 90 minutes at the finest resolution. Calibration and the $T = 3000$ resampling experiment take under 3 minutes in total. The fine resolution transfer test of Darcy flow, however took approximately 90 minutes to complete.

\subsection{Additional Numerical Results}\label{app:additional numerical}

This appendix complements \cref{sec:experiments} with per-resolution tables and distributional diagnostics. As in the main text, $N$ denotes resolution per spatial direction and $M=N^2$ denotes the number of grid points in 2D. On each fixed grid, the relevant validity statement is \cref{thm:grid guarantee}.

\begin{table}[!htbp]
  \centering
  \caption{Darcy flow: calibrated factor $\hat\lambda$ (single calibration split), mean bandwidth $\overline{B}$, and width ratio between methods at each resolution. Mean bandwidth is
    $\overline{B} := \frac{1}{n_{\mathrm{test}}}\sum_{i=1}^{n_{\mathrm{test}}}\frac{1}{M}\sum_{j=1}^{M}\hat\lambda\,\hat{\mathcal{E}}_M(A_i)(x_j)$.}
  \label{tab:darcy-efficiency}
  \vspace{0.3cm}
  \begin{tabular}{c cc| cc |c}
    \toprule
        & \multicolumn{2}{c}{Ours} & \multicolumn{2}{c}{Ma et al.} &                                            \\
    $N$ & $\hat\lambda$            & $\overline{B}$                & $\hat\lambda$ & $\overline{B}$      &
    ratio                                                                                                       \\
    \hline \rule{-1pt}{2.4ex}
    12  & 1.84                     & $9.43\times10^{-4}$           & 4.38          & $2.24\times10^{-3}$ & 2.37 \\
    20  & 1.59                     & $6.90\times10^{-4}$           & 2.24          & $9.71\times10^{-4}$ & 1.41 \\
    30  & 1.48                     & $5.52\times10^{-4}$           & 1.92          & $7.13\times10^{-4}$ & 1.29 \\
    42  & 1.64                     & $4.62\times10^{-4}$           & 2.05          & $5.77\times10^{-4}$ & 1.25 \\
    60  & 1.42                     & $5.41\times10^{-4}$           & 1.66          & $6.34\times10^{-4}$ & 1.17 \\
    84  & 1.54                     & $5.04\times10^{-4}$           & 1.79          & $5.87\times10^{-4}$ & 1.16 \\
    \hline
  \end{tabular}
\end{table}

Table~\ref{tab:darcy-efficiency} quantifies the efficiency gap from the main text on a single split. Our bands are uniformly tighter, from $2.37\times$ at $N=12$ to about $1.16$--$1.17\times$ at the finest resolutions. The correction term used by \cite{ma2024calibrated} decreases with resolution but remains conservative at practical $N$. The mean bandwidth $\overline{B}$ is not monotone in $N$ because operators are retrained independently at each resolution, so model quality and discretization both affect this quantity. By contrast, the per-resolution width ratio is smoother because both methods are applied to the same trained operators and residual fields.

\begin{figure}[!htbp]
  \hspace*{-1.3cm}
  \centering
  \includegraphics{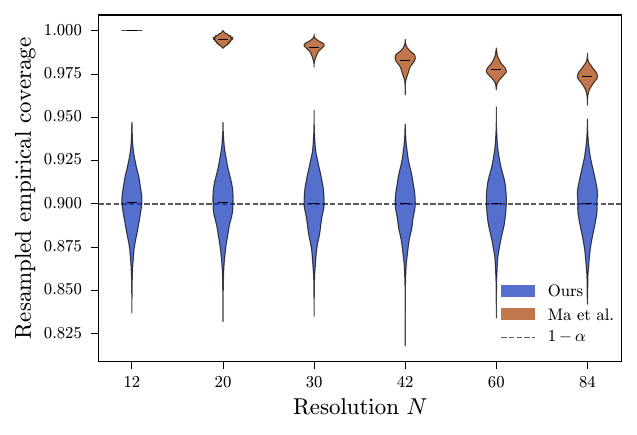}
  \caption{Darcy flow: distribution of re-sampled coverage by resolution and method, with means and target $1-\alpha$. Our distributions are centred near the target at each $N$, consistent with \cref{thm:grid guarantee}; those of \cite{ma2024calibrated} are shifted upward and collapse to coverage $1$ at the coarsest resolution.}
  \label{fig:darcy-violins}
\end{figure}

\begin{figure}[htbp]
  \hspace*{-1cm}
  \centering
  \includegraphics{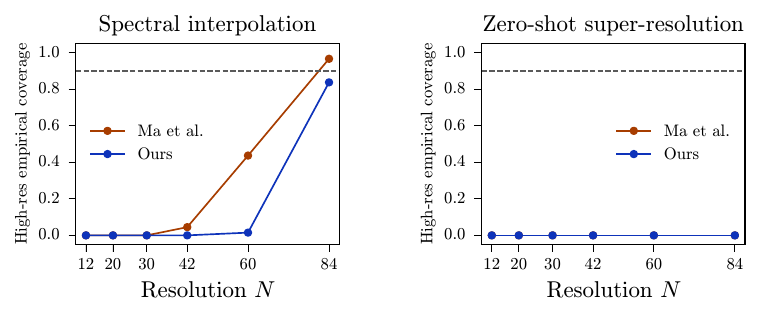}
  \caption{Darcy flow: coverage on the fine reference grid ($N_{\max}=420$) versus calibration resolution. \textbf{Left:} exact spectral interpolation of calibrated coarse-grid bands. \textbf{Right:} zero-shot super-resolution. This evaluates cross-grid transfer, which is outside the scope of \cref{thm:grid guarantee} (a fixed-grid guarantee). The correction in \cite{ma2024calibrated} increases width (up to $2.4\times$) yet reaches the target only at $N=84$, and zero-shot coverage is $0$ at all resolutions.}
  \label{fig:darcy-transfer}
\end{figure}

\begin{table}[!htbp]
  \centering
  \caption{Navier-Stokes: calibrated factor $\hat\lambda$, mean bandwidth $\overline{B}$, and the width ratio between the methods, per resolution.}
  \label{tab:ns-efficiency}
  \begin{tabular}{c cc |cc |c}
    \toprule
        & \multicolumn{2}{c}{Ours} & \multicolumn{2}{c}{Ma et al.} &                                       \\
    $N$ & $\hat\lambda$            & $\overline{B}$                & $\hat\lambda$ & $\overline{B}$ &
    ratio                                                                                                  \\
    \hline \rule{-1pt}{2.4ex}
    16  & 1.92                     & 0.419                         & 3.42          & 0.747          & 1.78 \\
    32  & 1.76                     & 0.354                         & 2.34          & 0.472          & 1.33 \\
    64  & 1.72                     & 0.354                         & 2.11          & 0.436          & 1.23 \\
    \hline
  \end{tabular}
\end{table}

Across both violin plots (\cref{fig:darcy-violins,fig:ns-violins}), our method is calibrated around the target across resolutions, while \citep{ma2024calibrated} is consistently above target. This is the expected efficiency-validity trade-off: both methods are valid, but the comparator is conservative.

\begin{figure}[htpb]
  \hspace*{-1cm}
  \centering
  \includegraphics{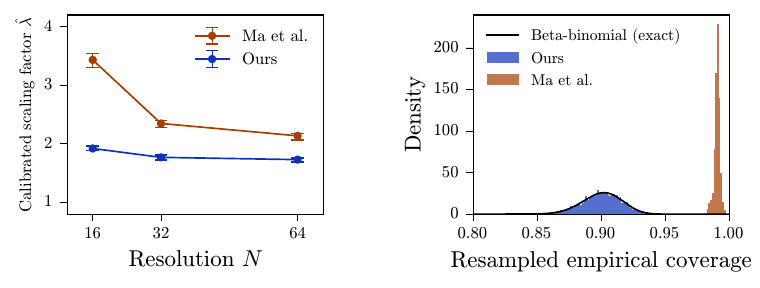}
  \caption{Navier--Stokes. \textbf{Left:} resampled median calibrated scaling factor $\hat{\lambda}$ versus resolution $N$, with empirical $5$th--$95$th percentile bars. \textbf{Right:} resampled coverage at $N=64$ against the Beta-Binomial reference implied by \cref{thm:grid guarantee} and the target $1-\alpha$.}
  \label{fig:ns-main}
\end{figure}

\begin{figure}[!htbp]
  \hspace{-1.3cm}
  \centering
  \includegraphics{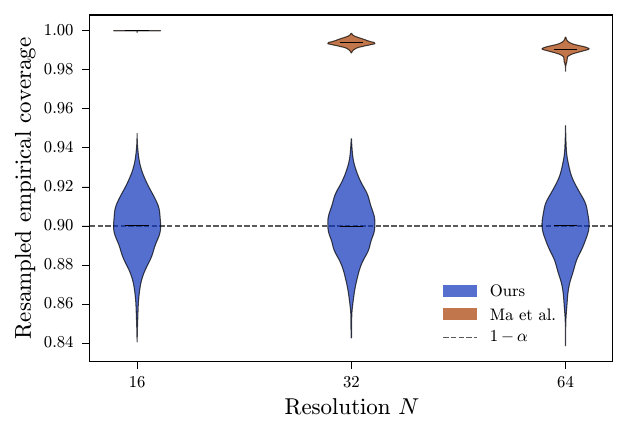}
  \caption{Navier--Stokes: distribution of re-sampled coverage by resolution and method, with means and target $1-\alpha$. Our distributions centre near the target, while those of \cite{ma2024calibrated} are shifted upward with smaller spread, indicating conservative validity.}
  \label{fig:ns-violins}
\end{figure}

For Navier--Stokes, \cref{fig:ns-main,tab:ns-efficiency} mirror the Darcy conclusions. Efficiency improves at every resolution (ratios $1.78\times$, $1.33\times$, $1.23\times$), and the re-sampled coverage for our method tracks the theorem-predicted Beta-Binomial reference closely. The distribution for \citep{ma2024calibrated} is shifted to higher coverage, confirming conservative validity with wider bands.

\end{document}